\numberwithin{equation}{section}
\newcommand{\innerprod}[3][2]{\langle #2,#3 \rangle_{#1}}
\renewcommand{\a}{\alpha}
\newcommand{\vect}[1]{\bm{#1}}
\newcommand{\D}{\Delta}
\renewcommand{\d}{\delta}
\newcommand{\btau}{\vect{\tau}}
\newcommand{\G}{\Gamma}
\newcommand{\vn}{\vect{n}}
\newcommand{\vx}{\vect{x}}
\newcommand{\vy}{\vect{y}}
\newcommand{\vz}{\vect{z}}
\newcommand{\vB}{\vect{B}}
\newcommand{\cG}{{\mathcal{G}}}
\newcommand{\cP}{{\mathcal{P}}}
\newcommand{\cN}{{\mathcal{N}}}
\newcommand{\cT}{{\mathcal{T}}}
\newcommand{\RR}{\mathbb{R}}
\newcommand{\ZZ}{\mathbb{Z}}
\renewcommand{\SS}{\mathbb{S}}
\newcommand{\cross}{\!\times\!}
\newcommand{\wh}{\widehat}
\newcommand{\wt}{\widetilde}
\newcommand{\diam}{\operatorname{diam\,}}
\newcommand{\at}[1]{\big\vert_{\raisebox{-0.5pt}{\scriptsize$#1$}} }
\newcommand{\p}{\partial}
\newcommand{\abs}[1]{\left\vert {#1}\right\vert}
\newcommand{\dualp}[2]{{\left\langle {#1},{#2} \right\rangle}}
\newcommand{\norm}[1]{\left\Vert#1\right\Vert}
\newcommand{\snorm}[1]{\left\vert#1\right\vert}
\newcommand{\ljump}{\lbrack\hspace{-1.5pt}\lbrack}
\newcommand{\rjump}{\rbrack\hspace{-1.5pt}\rbrack}
\newcommand{\jump}[2]{\ljump {#1} 
\rjump_{\raisebox{-2pt}{\scriptsize$#2$}} }
\newcommand{\revision}[1]{\textcolor{black}{#1}}
\newcommand{\rrevision}[1]{\textcolor{black}{#1}}
\newtheorem{remark}[theorem]{Remark}
\title{A Bootstrap Multigrid Eigensolver}
\author{James Brannick\thanks{Department of Mathematics, Pennsylvania State 
University, University Park, State College, PA 16802, brannick@psu.edu} 
\and 
Shuhao Cao\thanks{Division of Computing, Analytics, and Mathematics, School of Science and Engineering, 
University of Missouri Kansas City, Kansas City, MO 64110, 
scao@umkc.edu}}
\begin{document}

\maketitle

\begin{abstract}
This paper introduces bootstrap multigrid methods for solving eigenvalue problems arising 
from the discretization of partial differential equations.  Inspired by the full 
bootstrap algebraic multigrid (BAMG) setup algorithm that includes an AMG eigensolver, it is illustrated how the algorithm
can be simplified for the case of a discretized partial differential equation (PDE), thereby developing a bootstrap geometric multigrid (BMG) approach.
We illustrate numerically the efficacy of the BMG method for: (1) recovering eigenvalues 
having large multiplicity, (2) computing interior eigenvalues, 
and (3) approximating shifted indefinite eigenvalue problems.    
Numerical experiments are presented to illustrate 
the basic components and ideas behind the success of the overall bootstrap multigrid approach. 
For completeness, we present a simplified error analysis of a two-grid bootstrap algorithm for the Laplace-Beltrami eigenvalue problem. 
\end{abstract}

\begin{keywords} 
eigenvalue problem, bootstrap multigrid, multigrid eigensolver, surface finite element method
\end{keywords}

\begin{AMS}
58C40, 65N25, 65N30, 65N55
\end{AMS}

\pagestyle{myheadings}
\thispagestyle{plain}
\markboth{JAMES BRANNICK AND SHUHAO CAO}
{BOOTSTRAP MULTIGRID EIGENSOLVER}

\section{Introduction}
\label{sec:intro}
The aim of this paper is to present a bootstrap multigrid framework for solving eigenvalue problems arising from discretizing partial differential equations.  The approach we consider is motivated by the bootstrap algebraic multigrid (BAMG) setup algorithm designed and analyzed in~\cite{BAMG2010,BAMG2,iBAMG,BAMGSchwinger,BAMGOpt}.  The key components of the BAMG setup algorithm that motivate our proposed geometric bootstrap multigrid (BMG) algorithm are the adaptive or bootstrap construction of the coarse spaces coupled with the BAMG multilevel eigensolver used in constructing interpolation (or prolongation).  The overall BAMG process simultaneously computes approximations of algebraically smooth error, and constructs improved or enriched coarse spaces 
using these approximations. The resulting BAMG setup algorithm has been shown to provide for an efficient algebraic multigrid (AMG) eigensolver as well as a robust and efficient AMG setup process for solving sparse linear systems~\cite{BAMG2010,BAMGSchwinger}. In this paper, we adapt the BAMG setup algorithm to a geometric method, i.e., one that makes explicit use of the PDE and discretization in its design, and draw connections between this BMG approach and existing geometric two-grid and multigrid eigensolvers that have been developed for approximating PDE eigenvalue problems. 
We illustrate numerically that the resulting bootstrap method is suitable for: (1) recovering eigenvalues having large multiplicity, (2) computing interior eigenvalues, and (3) approximating shifted indefinite eigenvalue problems.  

The problem of interest in this paper to present a BMG eigensolver is the Laplace-Beltrami eigenvalue problem
\label{sec:LB}
\begin{equation}
\label{eq:pb-eig-strong}
-\D_{\G} u = \lambda u,
\end{equation}
where $\D_{\G}$ denotes the Laplace-Beltrami operator on a 2-dimensional, smooth, 
orientable, and closed surface  $\G$, $\lambda\in \RR^+$ is the eigenvalue to the continuous 
eigenvalue problem, and
$u: \G \to \RR$ denotes the associated eigenfunction. Letting  
\begin{equation}
\label{eq:pb-bl}
a(u,v) := \int_{\G} \nabla_{\G} u\cdot \nabla_{\G} v \,dS,
\quad\text{and}\quad  b(u,v) := \int_{\G} u v\,dS, 
\end{equation} 
the weak formulation of~\eqref{eq:pb-eig-strong} is as follows: Find $u\in 
H^1(\G)$ and $\lambda\in \RR^+$ 
such that
\begin{equation}
\label{eq:pb-eig-weak}
a(u,v) = \lambda b(u,v), \quad \text{ for any } v\in H^1(\G),
\end{equation}
where $H^1(\G):= \left\{v \in L^2(\G): \nabla_{\G} v\in L^2(\G) \right\}$ equipped 
with following norms:
\begin{equation*}
\norm{v}_{H^1(\G)}^2  :=\norm{v}_{L^2(\G)}^2
+ \snorm{v}_{H^1(\G)}^2, \text{ with }
\snorm{v}_{H^1(\G)} := \norm{\nabla_{\G} v}_{L^2(\G)}.
\end{equation*}
Throughout this paper, we assume that a
surface finite element (SFEM) discretization \cite{Dziuk1988finite,Dziuk2013finite} 
is used to approximate problem \eqref{eq:pb-eig-weak}.

Though we derive the geometric BMG eigensolver with a focus on the finite element 
approximation to the problem above, we note that the overall strategy is applicable for much wider classes of PDE eigenvalue problems.  The Laplace-Beltrami model is selected as our model problem since it is a challenging problem in that its eigenvalues have a large multiplicity. Moreover, the Laplace-Beltrami spectrum on a 2-sphere is explicitly known, allowing us to study the proposed BMG approach in detail for a specific problem.

\subsection{The Bootstrap AMG setup}
\label{subsec:BAMG}
The BAMG setup algorithm that serves as the main motivation for our proposed BMG eigensolver can be viewed as an algebraic multigrid eigensolver for the algebraic system $A u = \lambda B u$, with $(\lambda,u)$ the unknown eigenpairs.  As we show below, it is straightforward to derive a BMG approach directly from the BAMG setup algorithm.  

In contrast to geometric multigrid methods that begin with a coarse problem (on a coarse mesh) and then aim to construct a sequence of problems on increasingly finer meshes (resulting more accurate approximations), in AMG the problem is assumed to be given on the finest mesh, and then is sought to be coarsened in an algebraic fashion, i.e., using only algebraic information available from the linear system.  Accordingly, we label the finest-grid system matrix as $A_0=A$.  Later on, in our description of the related geometric approaches, we adopt the usual geometric multigrid notation and set the coarsest-grid system to $A_0$.  When discussing two-level approaches, $A_h$ represents the fine-grid matrix, and $A_H$ denotes the coarse-grid system matrix, obtained using the Galerkin form $A_H = P^T A_h P$ in the AMG setting, or by rediscretizing in the geometric multigrid setting.  

Given the fine-grid system based on $A_h$, the BAMG setup algorithm aims to construct an interpolation operator $P$, and the
corresponding coarse-grid matrix $A_H = P^T A_h P$. Certain approximation property is satisfied by the interpolation 
operator and, hence, by the corresponding coarse space~\cite{BAMG2010}.  Generally, the AMG interpolation operator must accurately approximate 
(generalized) eigenvectors corresponding to small eigenvalues of the fine-grid system matrix $A_h$~\cite{BAMGOpt}, which are referred to as the 
slow-to-converge error of the AMG smoother.  Various techniques exist for constructing accurate interpolation for problems where 
such spectral information is available, e.g., Classical AMG~\cite{1985BrandtA_McCormickS_RugeJ-aa} and Smoothed Aggregation~\cite{PVanek_JMandel_MBrezina_1995a}.  The adaptive AMG~\cite{Breetal2,Breetal3} and BAMG algorithms were designed to 
construct $P$ for cases where this information is not available, e.g., quantum dynamics applications.  The main idea in these approaches is to make use of a multilevel algorithm in the AMG setup to compute slow-to-converge error of relaxation, and then to use these modes to adapt the coarse space. 
The overall BAMG approach for enriching the coarse spaces uses the combination of a multilevel eigensolver that efficiently and accurately computes approximations to the 
(generalized) eigenvectors with small eigenvalues of the fine-grid system matrix, i.e., it aims to solve problems of the form
\begin{equation}
A u = \lambda B u
\end{equation}
e.g., an SFEM discretization of problem \eqref{eq:pb-eig-weak}.
Then, a least squares process is used to construct interpolation to fit these computed approximations.  The overall approach is applied in the usual recursive way to obtain the corresponding multilevel algorithm.  
We describe the overall process adopting the same notation we used in~\cite{BAMG2010} and recall some numerical results obtained for the Poisson problem as well.  

Bootstrap AMG interpolation is derived to provide the best least squares (LS)
fit to a set of smooth test vectors $\mathcal{V} = \big\{v^{(1)}, \dots, v^{(k)}\big\}$.
Specifically, each row of interpolation, denoted by $p_i$,
is defined as the minimizer of the local least squares functional: 
\begin{equation}\label{eq:ls}
\mathcal{L}(p_i) = 
\sum_{\kappa=1}^k\omega_\kappa\left((v)_{\{i\}}^{(\kappa)} - \sum_{j\in \Omega_{i}} \left(p_{i}\right)_{j} v_{\{j\}}^{(\kappa)}\right)^{2}, 
\end{equation} 
where $\Omega_i$ are the sets of interpolation points, $v_{\widetilde{\Omega}}$
denotes the canonical restriction of a vector $v$ to the set $\widetilde{\Omega} \subset \Omega := 
\{1,..,n\}$, and $\omega_\kappa$ denote the {\em interpolation weights}.  A common choice for the weights is given by $\omega_k = \|v_k\|/\|Av_k\|$, in which case the LS functional can be viewed as a local version of the weak approximation property~\cite{BAMG2010}, assuming $A$ is symmetric and positive definite.

The rationale behind the bootstrap multilevel generalized eigensolver is as follows. If an initial multigrid hierarchy is constructed using LS interpolation, given the initial Galerkin operators $A_{0} = A, A_{1}, \ldots, A_{L}$ on each level
and the corresponding interpolation operators $P_{l+1}^{l}, l =
0,\ldots,L-1$, define the composite interpolation matrices and corresponding mass matrices as
$P_{l} = P_{1}^{0}\cdot \ldots \cdot P_{l}^{l-1}$ and $T_{l} = P_{l}^{T}B_0P_{l}$, $l = 1, \ldots, L$,  then for any level $l$ and any given vector $x_{l} \in \mathbb{R}^{n_l}$ and $\lambda^{l}\in \mathbb{R}$ such that
$  A_l x_{l} = \lambda^{l}T_l x^{l}$ we have
\begin{equation}\label{eq:evalapprox}
 \text{Rayleigh quotient of } P_l x^{l} :=  \frac{\innerprod[A_{l}]{x_{l}}{x_{l}}}{\innerprod[T_{l}]{x_{l}}{x_{l}}} = \frac{\innerprod[A_0]{P_{l}x_{l}}{P_{l}x_{l}}}{\innerprod{P_{l}x_{l}}{P_{l}x_{l}}} = \lambda^{l},
\end{equation} 
 where we used that $\innerprod[A_{l}]{x_{l}}{x_{l}} = \innerprod[A_0]{P_{l}x_{l}}{P_{l}{ x_{l}}}$, which follows from the definition of
$ A_{l}= P_{l}^T A_0 P_{l}$.
This result relates the eigenvectors and eigenvalues
of any of the coarse-grid matrices to 
the eigenvectors and eigenvalues of the finest-grid operator $A$.
Note that the eigenvalue approximations in~\eqref{eq:evalapprox} are continuously updated within the algorithm so that the overall approach resembles an inverse Rayleigh-Quotient
iteration found in eigenvalue computations (cf.~\cite{Wilkinson:1988:AEP:59657}).  
Fig.~\ref{fig:boot:setupcycle} provides a schematic outline of a $V$-cycle version of the BAMG setup algorithm.  We note that $\mathcal{V}^r$ and $\mathcal{V}^e$ are the sets of approximate test vectors from relaxations and approximate eigenvectors, respectively, that are used in computing the LS interpolations.

\begin{figure}
\begin{center}
\includegraphics[width=\textwidth]{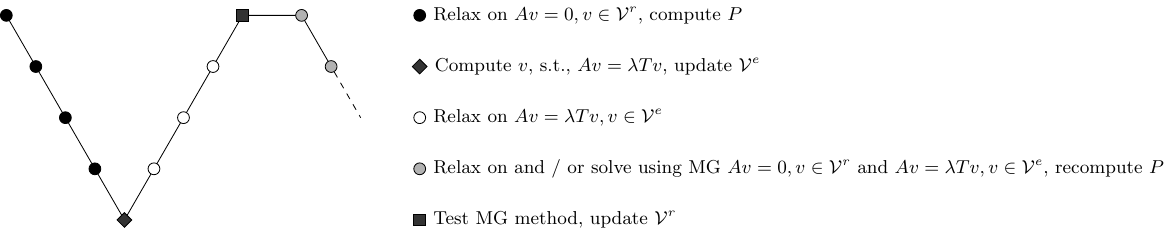}
  \caption{A Possible Galerkin Bootstrap AMG $V$-cycle setup scheme from~\cite{BAMG2010}.}
\label{fig:boot:setupcycle}
\end{center}
\end{figure}

Following are a few general remarks regarding the above BAMG setup algorithm and its conversion to a geometric multigrid method, before we proceed with a general discussion of the BMG eigensolver:

\begin{itemize}[topsep=2pt, leftmargin=1.5em]

\item The BAMG eigensolver only solves the eigenvalue problem on the coarsest-level and then uses an iterative method to approximately solve (simpler) shifted linear systems on the corresponding finer levels.  This strategy of only solving the eigenvalue problem on the coarsest level is what gives the approach its efficiency and ultimately its near optimality.

\item The method continuously updates (enriches) the coarse space using these improved eigen-approximations in the LS interpolation process and then the Galerkin definition of the coarse-level operator.  Moreover, it is this enrichment that gives the method the flexibility 
to construct approximations to both oscillatory and smooth eigenvectors using very coarse meshes.

\item The overall procedure uses a relatively large enrichment space since experimentally this gives the best overall results in the BAMG construction of interpolation and, hence, ultimately yields the best overall AMG solver. 
We propose a similar enrichment strategy that uses multiple vectors in the BMG eigensolver,    however, since we are interested in solving the eigenvalue problem and not using the approach as a setup process for defining interpolation, 
we omit the set of relaxed vectors $\mathcal{V}^e$ in the BMG algorithm and enrich the coarse space using only eigenvector approximations $\mathcal{V}^e$.  Correspondingly, the BMG approach begins by solving an eigenvalue problem on the coarsest level $A_0$.
\item A main cost in the BAMG setup is the need to compute the AMG hierarchy in each iteration, i.e., the cost of computing
 $P_{l+1}^{l}$, and $A_l  = ( P_{l+1}^{l})^T A_{l-1} P_{l+1}^{l}$ on all levels.  In the BMG setting, the coarse spaces are defined 
 using the chosen finite element discretization plus an additional global enrichment space.  This difference is, in fact, the main modification that we make to the BAMG eigensolver in order to arrive at the BMG approach.  
 \item Though we include the possibility of using the existing MG solver in the BAMG setup when solving the shifted linear systems on finer meshes, typically only a few relaxation steps are needed.  We use many pre- and post-smoothing steps in the BAMG setup 
 in order to reduce the number of setup cycles which, in turn, offsets the cost of recomputing the AMG hierarchy in each iteration of the BAMG setup. For example, using a standard discretization for the 2D Laplace eigenvalue problem in \cite{BAMG2010} on a structured mesh, two $V(4,4)$ BAMG setup cycles as depicted in Fig.~\ref{fig:boot:setupcycle} on a finest-level mesh of size $64 \times 64$ using standard (full) coarsening and Gauss-Seidel relaxation yield comparable result versus direct methods with all geometric information. These partial experimental results from~\cite[Fig. 4.2 and Table 4.5]{BAMG2010} illustrate the efficacy and potential of the BAMG setup algorithm as an AMG eigensolver.
\end{itemize}

\subsection{Existing geometric multigrid eigensolvers}

Generally speaking, in situations where multiple 
eigen-modes are required, a highly refined uniform mesh is needed for a good approximation for all these modes. In such cases, solving the associated systems of equations on this given fine mesh can be prohibitively expensive. There are many methods taking advantage of a multilevel hierarchy to remedy this cost. For the 2D Laplace eigenvalue problem on the plane, a 
two-grid eigensolver that reduces the overall costs of solving the resulting finite element systems was proposed in~\cite{XuZhou99eig}, and later improved using a 
Newton-type iteration in~\cite{HuCheng11ieig}. A similar 
approach was designed for the Maxwell eigenvalue problem in~\cite{ZhouHul14MaxEig}. These 
two-grid methods involve a coarse mesh and a fine mesh and the finite element spaces 
defined on these meshes. In addition, a direct solve, e.g., \texttt{eig} in 
MATLAB, is used to solve the coarse space eigenvalue problem, and then Newton's 
method is applied on the fine mesh in order to solve the 
nonlinear eigenvalue problem using the coarse solution as an initial guess, which results solving a linear source problem. The a priori analysis for the finite element approximations to the Laplace-Beltrami eigenvalue problem is studied in \cite{Bonito2018priori,Reusken2022analysis}.
To the best of our knowledge, there is no known two-grid (or multigrid) 
eigensolver for the Laplace-Beltrami operator on surfaces in the finite element 
setting, which is the model problem we design the BMG eigensolver for in this paper.  

Though two-grid methods do provide significant improvements when compared with 
single-grid methods (such as the Arnoldi algorithm) in terms of their computational 
complexity, 
they too have drawbacks in practice. First, 
two-grid methods are generally not optimal since the mesh spacing tends to zero even for
the coarse eigenvalue problem, which needs to be solved with high accuracy. 
For example, in order to resolve eigenpairs corresponding to large eigenvalues 
in the discrete spectra for our model problem, the coarse mesh used in the two-grid method must be fine 
enough to capture these oscillatory modes. In practice, we observe a ``loss of spectra'' phenomenon when using two-grid methods 
where the coarse mesh is not fine enough. This issue is overcome in our proposed BAMG algorithm by 
using the idea to enrich the coarse space as in~\cite{BAMG2010} and as discussed above. We note in addition 
that, these two-grid eigensolvers often require solving a linear source problem on the 
fine mesh that is indefinite so that using optimal solvers such as multigrid can 
become problematic. As we show 
numerically in this paper, it is not necessary to solve this indefinite problem 
directly, and a few sweeps of an iterative solver suffice to obtain an almost optimal 
multigrid algorithm.  In fact, for certain cases, we show that the shift can be 
moved to the right hand side using an interpolated coarse approximation to the 
eigenfunction of interest.

In~\cite{CaiMandelMcCor97mgeig},
multilevel analogues of the two-grid solvers noted above are developed.  
Specifically, the paper develops multilevel approaches for nearly singular elliptic 
problems and eigenvalue problems. It should be noted that these methods 
are able to approximate the components in the 
eigenspace with small eigenvalues of \eqref{eq:pb-eig-weak} and 
as presented cannot be used to approximate larger eigenpairs.
This is because on the coarsest level the corresponding space is defined using
standard finite element basis functions, e.g., nodal Lagrange basis, and thus 
it is not possible to approximate oscillatory functions.

In a recent paper \cite{Lin2015multi}, another multilevel approach 
was developed in which the coarse eigenvalue problem is solved in an enriched space.  
This enrichment is achieved by including a single extra function in the coarse space 
that is obtained by solving a positive definite source problem on a finer mesh. Then, 
this two-grid correction scheme is used repeatedly to span multiple levels, 
resembling the bootstrapping procedure developed in~\cite{BAMG2010} but 
only for a single enrichment vector. The approach we 
propose involves a suitable geometric projection from
coarse spaces to fine spaces (defined on a sequence of refined and non-nested meshes)
and the use of a bootstrap enrichment procedure to iteratively improve the coarse spaces until 
the desired approximation is computed to sufficient accuracy.

It is known that the Laplace-Beltrami eigenvalues have very high 
multiplicity on closed surfaces (e.g., see \cite[Chapter 3]{Schoen-Yau}), which adds to the difficulties associated with solving this system.  
For example, for the Laplacian-Beltrami operator on the 2-sphere, the 
number of linearly independent eigenfunctions associated with $l$-th distinct 
eigenvalue $\lambda = l(l+1)$ is $2l+1$. Thus, the bootstrap approach we propose
which enriches the coarsest space with a subspace of linearly independent approximations 
is of particular interest for this model problem. 
We note that the idea to enrich the coarse space in designing eigensolvers goes back 
to~\cite{KZ15} and~\cite{M15}.  
These authors also analyze an iterative method for computing the smallest eigenpair 
under the somewhat restrictive condition that the initial guess
of the eigenfunction is sufficiently close to the smallest one, namely that its 
Rayleigh quotient lies between the smallest and second smallest
eigenvalues. In~\cite{Chan15}, the method from~\cite{KZ15} is extended to both
two-grid and multigrid methods and an algorithm for computing a given number of the 
smallest eigenpairs is presented. The paper also presents a convergence theory with 
less restrictive assumptions on the initial guess.   

In this paper, we develop the BMG eigensolver for the surface finite element discretization of the shifted 
Laplace-Beltrami eigenvalue problem. 
The base two-grid method can be viewed as a generalization of the approaches 
proposed in~\cite{KZ15,M15,Chan15,Lin2015multi} in that the coarse space is enriched 
with a subspace, instead of a single eigenfunction and we consider computing 
interior eigenvalues directly by introducing a shift. Alternatively, our proposed approach 
can be viewed as a simplification of the BAMG algorithm~\cite{BAMG2010} in that we 
use the finite element spaces to explicitly define the components of the multilevel 
method, including interpolation and restriction operators among different levels, 
and the enriched coarsest space eigenvalue problem. \revision{Meanwhile, the iterative procedure involving multi-levels of mesh refinement resembles the geometric cascadic multigrid in \cite{Urschel;Hu;Xu;Zikatanov:2014cascadic} and \cite{Han;Xie;Xu:2017CASCADIC}.}  Overall, we note that though we focus on this model problem the approach we present here
is applicable to much wider classes of problems with relatively few modifications.  

This paper is organized as follows. In Section 2, we provide some preliminary 
notations and present the a priori estimate of the 
surface finite element method for the eigenvalue problems from 
\cite{Bonito2018priori}. In Section 
3, we introduce the standard two-grid method for Laplace-Beltrami eigenvalue problems on 
surfaces mimicking the approaches developed for elliptic eigenvalue problem 
in~\cite{XuZhou99eig,HuCheng11ieig,ZhouHul14MaxEig}.
In addition, we prove the convergence of this method for the case of a smooth, 
closed, and orientable surface. In Section 4, we derive the 
finite element bootstrap multigrid method for the Laplace-Beltrami eigenvalue problem and give details on the approach. In addition,
we prove the convergence of the bootstrap two-grid eigensolver for the shifted Laplace-Beltrami eigenvalue problem in the case the enrichment space is defined using several functions.  We note that on a surface the non-nestedness of the meshes also introduces a geometric error into the discrete approximations.  In this regard, our model problem presents several challenges and our  two-grid approach is quite general since the inclusion of a shift also covers the case of computing interior eigenvalues (with high multiplicity).  Section 5 contains 
results of numerical experiments for both the two-grid and 
multigrid methods applied to the model problem on $\SS^2$. Note that 
by fixing the geometry we are able to study the algorithm
in a detailed and systematic way.

\section{Notation and preliminary results}
\label{sec:prelim}
In this section, the finite element approximation, together 
with its a priori error estimate, to the eigenvalue problem \eqref{eq:pb-eig-weak} are
presented. 
Before presenting the details of the discretization we set notation that is used throughout 
the paper.  Here, we need to distinguish between the continuous solution of the eigenvalue problem
on the continuous surface, $\Gamma$, the continuous solution on the discrete surface (mesh), $\Gamma_h$, used to approximate
the surface (which introduces a geometric error), and the discrete finite element approximation obtained 
on the discrete surface which admits a discretization error.  

We denote the eigen-modes to the continuous eigenvalue problem on the continuous surface by lower case letters, e.g., $u$ and the associated eigenvalues by $\lambda$.  The associated solution to the continuous eigenvalue problem
on the discrete surface $\G_h$ is denoted by $\bar{u}$ and the associated 
eigenvalues are given by $\bar{\lambda}$.  We distinguish between these two 
solutions since the solution on the discrete surface will 
only approximate the true solution since the discrete surface approximation 
introduces a geometric error. The finite element spaces are similarly defined, where 
we use instead capital calligraphic letters, e.g., the finite element space on 
$\G_h$ is denoted by $\mathcal{V}_h$.

In describing a multigrid approach for solving eigenvalue problems it is convenient to 
distinguish between the solutions of the non-linear finite-element eigenvalue problem and 
the solutions to associated linear problems.
Assume that a pair of finite element spaces are given, where the 
coarse finite element approximation space is $\mathcal{V}_H$ and the fine 
space is $\mathcal{V}_h$. 
Then, the subscript on $u_h$ denotes that this is the solution to a direct 
eigensolve of the eigenvalue problem in $\mathcal{V}_h$. 
A superscript $u^h$ denotes a source problem approximation in the fine 
space $\mathcal{V}_h$. In a two-grid method, a direct eigen-solve solution $u_H$ in 
a coarser space $\mathcal{V}_H$ (to a nonlinear problem) is used, and a source 
problem approximation $u^h$ using $u_H$ as data in a finer space $\mathcal{V}_h$ 
yields an improved approximation. The vector representation of $u_h$ in the 
canonical finite element basis of $\mathcal{V}_h$ is denoted by $U_h$, while $U^{h}$ is for 
$u^h$. This nomenclature, where a subscript 
corresponds to a direct eigensolve and a superscript stands for source problem 
approximation, is adopted throughout the paper. Matrices and operators are denoted 
with capital letters, where the actual definition should be clear from context. 
Moreover, the details on our construction 
of fine meshes and the associated eigen-spaces are given in the beginning of Section~\ref{ssec:tgES}.

We use $x \lesssim y$ and $z \gtrsim w$ 
to represent $x \leq c_1 y$ and $z \geq c_2 w$ respectively, where $c_1$ and $c_2$ 
are two constants independent of the mesh size $h$ and eigenvalues. The constants in 
these inequalities may in certain cases depend on specific eigenvalue(s) and when 
such dependence exists, it will be stated explicitly.

The surface gradient operator on a 2-dimensional smooth orientable surface that can 
be embedded into $\RR^3$ can be defined using extensions $\nabla_{\G}: H^1(\G) \to 
(L^2(\G))^3$ as follows.
\revision{
\begin{equation}
\label{def:sg}
(\nabla_{\G} f)(\vx) := 
\bigl(I - \vn(\vx)\vn(\vx)^{\top}\bigr) \nabla\wt{f}(\bm{x}) 
=\vn(\vx)\cross\bigl(\nabla\wt{f}(\vx)\cross\vn(\vx)\bigr),
\end{equation}
where $\wt{f}$ is a smooth extension of $f$ to a 3-dimensional tubular neighborhood $U$ of $\G$, 
$\nabla: H^1(\Omega) \to (L^2(\Omega))^3$ 
is the weak gradient operator in $\RR^3$, and $\vn(\vx)$ is the unit normal 
pointing 
to the outside of this closed surface at point $\vx$.
The Laplace-Beltrami operator $\D_{\G}$ is then defined in a distributional sense:
\begin{equation}
\label{eq:bl-cont}
\dualp{-\D_{\G} f}{g}=\int_{\G} (-\D_{\G} f)g\,dS:= 
\int_{\G} \nabla_{\G} f\cdot \nabla_{\G} g \,dS,\quad \forall g\in C^{\infty}(\G).
\end{equation}
} For a more detailed definition and the technicalities that arise when
defining a differential operator on surfaces, we refer the readers to 
\cite{Demlow2007adaptive,Dziuk2013finite,BonitoDemlowNochetto2020finite}.

\subsection{The eigenvalue problem on the discrete surface}
Let $\cT_h = \{T\}$ be a triangulation and $\G_h = \cup_{T\in \cT_h}T$ be piecewise 
planar surface approximating the continuous surface $\G$, where 
$T$ stands for the ``flat'' triangular element. $\G_h$ is assumed to be 
quasi-uniform and regular. The mesh size is then defined as the maximum of the 
diameter of all the triangles: $h:=\max_{T\in \cT_h}\diam T $.  Furthermore, the set 
of all vertices is denoted by $\cN_h$. For any $\vz\in \cN_h$, it is assumed that 
$\vz\in \G$, i.e., any vertex in the triangulation lies on the original continuous 
surface $\G$.  

Note that the surface gradient on a smooth surface carries over naturally to a 
discrete surface $\G_h$ (e.g., see \cite{Wei2010recovery}): the 
unit normal $\vn(\vx)$ is now a constant vector $\vn_T$ for each point $\vx\in T$. 
For ease of notation, the surface gradient 
$\nabla_{\G_h}$ on $\G_h$ and on $\G$ will both be denoted by $\nabla_{\G}$, where 
the definition should be clear from the context.  With these definitions the 
bilinear forms on $\G_h$ are as follows
\begin{equation}
\label{eq:bl-dis}
a_h(\bar{u},\bar{v}):= \int_{\G_h} \nabla_{\G} \bar{u} \cdot \nabla_{\G} \bar{v} \,dS,
\;\text{ and }\; b_h(\bar{u},\bar{v}):= \int_{\G_h}  \bar{u} \bar{v} \,dS.
\end{equation}
Note that, the subscript used in defining the bilinear forms is used to represent that the
continuous problem has been restricted to a discrete surface, $\G_h$.
Moreover, the fact that this discrete surface $\G_h$ is piecewise linear affine, which gives a 
$C^{0,1}$-surface, implies that the Sobolev space $H^1(\G_h)$ is well-defined
(see \cite{Dziuk1988finite}). 

The weak formulation for the 
eigenvalue problem on the discrete surface $\G_h$ is now given by:
find $\bar{u}\in H^1(\G_h)$ and $\bar{\lambda} \in \RR^+$ such that
\begin{equation}
\label{eq:pb-eig-dis}
a_h(\bar{u},\bar{v}) = \bar{\lambda}\, b_h(\bar{u},\bar{v}), \quad \text{ for any } \bar{v}\in  H^1(\G_h).
\end{equation}

Using the Poincar\'{e} inequality 
(see \cite{Demlow2007adaptive} Lemma 2.2) or the compact embedding of $H^1(\G_h)/\RR 
\subset \subset L^2(\G_h)$ when $\G_h$ is a
piecewise linear affine manifold (\cite[Chapter 2]{aubin2013} ), and the geometric 
error estimate between \eqref{eq:pb-bl} and \eqref{eq:bl-dis} (e.g. see 
\cite[Section 4]{Dziuk2013finite} ), it follows 
that if the mesh is sufficiently fine (required for the coercivity), then for any 
$\bar u, \bar v\in H^1(\G_h)/\RR$
\begin{equation}
\label{eq:pb-wp}
a_h( \bar u, \bar v) \lesssim \norm{ \bar u}_{H^1(\G_h)} \norm{ \bar 
v}_{H^1(\G_h)},\;
\text{ and }\; a_h( \bar u, \bar u) \gtrsim \norm{ \bar u}_{H^1(\G_h)}^2.
\end{equation}
If $ \bar v\neq 0$, $b_h( \bar v, \bar v) = \norm{v}_{L^2(\G_h)}^2 > 0$, the 
coercivity and continuity of \eqref{eq:pb-wp} implies that $a_h(\cdot,\cdot)$ induces a bounded, compact, and 
self-adjoint operator. 
By the 
Hilbert-Schmidt theory, and the spectrum theory of the Laplacian-Beltrami operator 
on compact surfaces (every closed surface being compact, see \cite{chavel1984}), 
problem \eqref{eq:pb-eig-dis} is a well-posed self-adjoint eigenvalue problem. The 
eigenvalues $\{\bar{\lambda}_i\}_{i=0}^{\infty}$ for problem \eqref{eq:pb-eig-dis} 
form a discrete sequence, starting from 0, with no 
accumulation point:
\[
0=\bar{\lambda}_0< \bar{\lambda}_1 \leq \bar{\lambda}_2 \leq \dots \to \infty.
\]
Moreover, the eigenfunctions $\phi_k$ associated with $\bar{\lambda}_k$ are
orthogonal in the sense that $b_h(\phi_i,\phi_j) = \d_{ij}$.

Let $M(\bar{\lambda})$ be the eigenspace spanned by the eigenfunctions 
associated with $\bar{\lambda}$ for \eqref{eq:pb-eig-dis} defined on the discrete 
surface $\G_h$:
\begin{equation}
M(\bar{\lambda}) := 
\{ \bar u \in H^1(\G_h): a_h( \bar u, \bar v) = \bar{\lambda} b_h( \bar u, \bar v), \forall  \bar v\in H^1(\G_h) \}.
\end{equation}
Similarly, the eigenspace for the continuous eigenvalue problem \eqref{eq:pb-eig-weak} on 
$\G$ is given by
\begin{equation}
M({\lambda}) := 
\{u \in H^1(\G): a(u,v) = {\lambda} b(u,v), \forall v\in H^1(\G) \}.
\end{equation}

\subsection{Finite element approximation}
In this subsection, the surface finite element 
discretization \eqref{eq:pb-eig-fea} of eigenvalue problem \eqref{eq:pb-eig-dis} is 
established. Here, if the 
geometric error introduced by the discrete surface is sufficiently small, then the
surface finite element approximates the eigenvalue problem \eqref{eq:pb-eig-weak} on the 
original smooth surface. In the last part of this subsection, the a priori error 
estimation for the surface finite element eigenvalue problem using a direct eigensolve is 
presented, giving Lemma \ref{lem:apriori}. Note that, the orders of the 
approximation errors for the 
computed eigenpairs given in this lemma are useful in determining the effectiveness of an
iterative procedure to obtain approximate eigenpairs, namely, the two-grid or multigrid method 
need to compute approximations with the same (or higher) order of approximation error.

The finite element approximation to problem \eqref{eq:pb-eig-dis} $\mathcal{V}_h$ is 
as follows:
\begin{equation}
\label{eq:sp-fe}
\mathcal{V}_h = \{\phi_h \in C^0(\G_h): \phi_h\at{T} \in P^1(T) \;\; \forall \,T\in 
\cT_h\}.
\end{equation}
The discretization to problem \eqref{eq:pb-eig-dis} is: to find $u_h\in 
\mathcal{V}_h$ and $\lambda_h\in \RR^+$ such that
\begin{equation}
\label{eq:pb-eig-fea}
a_h(u_h,v_h) = \lambda_h b_h(u_h,v_h), \quad \forall\, v_h\in \mathcal{V}_h.
\end{equation}
Note that the finite element approximation problem \eqref{eq:pb-eig-fea} 
serves as a straightforward 
conforming discretization to \eqref{eq:pb-eig-dis} on the discrete polygonal 
surface $\G_h$, but not directly to the original eigenvalue problem 
\eqref{eq:pb-eig-weak} on $\G$. 

The connection between the approximation on $\G_h$ and its continuous 
counterpart on the surface $\G$ is established through a bijective lifting operator 
(see \cite{Demlow2007adaptive}), between any triangle 
$T\subset\G_h$ to a curvilinear triangle on $\G$. Then, for any $v \in H^1(\G_h)$, 
its lifting $\wt{v}$ to the continuous surface $\G$ can 
be defined as follows: for any point $\vx\in \G_h$, there is a unique point 
$\wt{\vx}\in \G$, such that
\begin{equation}
\label{eq:lift}
\wt{\vx} + d(\vx)\vn(\wt{\vx}) = \vx,\; 
\text{ and }\;\wt{v}\bigl(\wt{\vx}\bigr) = v(\vx),
\end{equation}
where $d(\vx)$ is the signed distance to $\G$ at point $\vx\in \G_h$
and $d(\vx)$ is positive when $\vx$ is outside of the closed surface $\G$, with 
$|d(\vx)| = \min_{\vy\in \G}|\vx - \vy|$. 
\begin{lemma}[Lemma 3 in \cite{Dziuk1988finite}, Lemma 4.7 in \cite{Dziuk2013finite}]
\label{lem:lift}
If $d$ defined in \eqref{eq:lift} satisfies $\norm{d}_{L^{\infty}(\G_h)} \lesssim h^2$, then for any $v\in H^1(\G_h)$
\begin{equation}
\label{eq:err-geom}
\begin{aligned}
(1-ch^2) \snorm{v}_{{H^1(\G_h)}} \leq \snorm{\wt{v}}_{{H^1(\G)}} \leq (1+ch^2) 
\snorm{v}_{{H^1(\G_h)}},
\\ 
\text{and } \; (1-ch^2) \norm{v}_{{L^2(\G_h)}} \leq \norm{\wt{v}}_{{L^2(\G)}} \leq (1+ch^2) 
\norm{v}_{{L^2(\G_h)}}.
\end{aligned}
\end{equation}
For any $u_h,v_h\in \mathcal{V}_h$,
\begin{equation}
  \label{eq:err-geom-2}
|b(\wt{u}_h, \wt{v}_h) - b_h(u_h,v_h)|\leq c h^2 \norm{u_h}_{L^2(\G_h)} \norm{v_h}_{L^2(\G_h)}.
\end{equation}
\end{lemma}
When using the linear surface finite element on a piecewise planar mesh on $\G_h$, 
the following estimate holds (\cite{Bonito2018priori}):
\begin{lemma}
\label{lem:apriori}
If the mesh size $h$ is small enough and all the vertices of $\G_h$ lie on $\G$, 
then for an eigenvalue $\lambda$ of problem \eqref{eq:pb-eig-weak} with multiplicity 
$m$ on $\G$, there exist $m$ $\lambda_{h,k}$'s that are the eigenvalues of problem 
\eqref{eq:pb-eig-fea} on $\G_h$, and 
\begin{equation}
\label{eq:est-ev}
|\lambda_{h,k} - \lambda |\leq C(\lambda) h^2, \quad \forall 1\leq k\leq m.
\end{equation}
Moreover, let 
$M(\lambda_h) = \operatorname{span} \left\{ u_{h,k} \right\}_{k=1}^m$, where 
$u_{h,k}$'s are the eigenfunctions associated with 
$\lambda_h$, then for any eigenfunction $u\in M({\lambda})$,
\begin{equation}
\label{eq:est-ef}
\min_{w_h\in M(\lambda_h)}\vert{u - \wt{w}_h}\vert_{H^1(\G)} \leq C(\lambda) h.
\end{equation}
\end{lemma}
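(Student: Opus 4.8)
### Proof Proposal

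The plan is to combine two independent ``layers'' of approximation error: the geometric error coming from replacing the smooth surface $\G$ by the polygonal surface $\G_h$, and the finite element discretization error on the fixed polygonal surface $\G_h$. First I would treat the intermediate problem~\eqref{eq:pb-eig-dis}, the continuous eigen-problem on $\G_h$, as the pivot. By the Hilbert--Schmidt spectral theory already invoked in the excerpt, the eigenvalues $\{\bar\lambda_i\}$ of~\eqref{eq:pb-eig-dis} and the eigenvalues $\{\lambda_{h,k}\}$ of~\eqref{eq:pb-eig-fea} satisfy the classical Babuška--Osborn theory for conforming Galerkin approximation of compact self-adjoint operators: for the eigenvalue $\bar\lambda$ of multiplicity $m$ there are exactly $m$ discrete eigenvalues $\lambda_{h,k}$ with
\begin{equation}
|\lambda_{h,k}-\bar\lambda| \lesssim \bigl(\d_h(\bar\lambda)\bigr)^2 \lesssim \bar\lambda\, h^2,
\end{equation}
where the middle bound is the standard ``square of the best-approximation error'' estimate and the last step uses the Clément-type quasi-interpolation bound $\d_h(\bar\lambda)\lesssim h$ recorded just above the lemma, with the constant tracking $\bar\lambda$ through the $H^1$-norm of the eigenfunctions. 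Similarly, for the eigenfunctions one gets $\min_{w_h\in M(\lambda_h)}|u_{\G_h}-w_h|_{H^1(\G_h)}\lesssim \d_h(\bar\lambda)\lesssim h$ for any $u_{\G_h}\in M(\bar\lambda)$, again with an eigenvalue-dependent constant.

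Next I would quantify the geometric error between~\eqref{eq:pb-eig-weak} on $\G$ and~\eqref{eq:pb-eig-dis} on $\G_h$. Using Lemma~\ref{lem:lift}, the bilinear forms transfer under lifting with relative perturbation $O(h^2)$: for $v\in H^1(\G_h)$ one has $a_h(v,v)=a(\wt v,\wt v)(1+O(h^2))$ and $b_h(v,v)=b(\wt v,\wt v)(1+O(h^2))$, hence the Rayleigh quotients, and therefore the eigenvalues, agree up to $|\bar\lambda_i-\lambda_i|\lesssim \lambda_i h^2$; the min-max characterization makes this rigorous level by level, and the multiplicity is preserved once $h$ is small enough that the $O(h^2)$ perturbation cannot merge or split the relevant cluster. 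For the eigenfunctions, a standard Strang-type argument (or the $\d_h(\lambda)$ versus $\d_h(\bar\lambda)$ comparison that is set up in~\eqref{eq:err-i}) gives that the lifted discrete eigenspace on $\G_h$ approximates $M(\lambda)$ on $\G$ in $|\cdot|_{H^1(\G)}$ to order $h$ — this is where the references \cite{dodziuk1976finite,Dziuk1988finite,Larson2000posteriori} and the cotangent-equivalence remark do the work.

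Finally I would chain the two layers. For~\eqref{eq:est-ev}: pick the eigenvalue $\bar\lambda$ of~\eqref{eq:pb-eig-dis} closest to $\lambda$; then $|\lambda_{h,k}-\lambda|\le|\lambda_{h,k}-\bar\lambda|+|\bar\lambda-\lambda|\lesssim \bar\lambda h^2 + \lambda h^2 \lesssim C(\lambda) h^2$, absorbing the $h$-dependence of $\bar\lambda$ (itself within $O(h^2)$ of $\lambda$) into the constant. For~\eqref{eq:est-ef}: given $u\in M(\lambda)$, use the $\G$-vs-$\G_h$ step to find $u_{\G_h}\in M(\bar\lambda)$ with $|u-\wt u_{\G_h}|_{H^1(\G)}\lesssim C(\lambda)h$, then use the Galerkin step to find $w_h\in M(\lambda_h)$ with $|u_{\G_h}-w_h|_{H^1(\G_h)}\lesssim C(\lambda)h$, lift the latter estimate via Lemma~\ref{lem:lift} (cost: a factor $1+ch^2$), and apply the triangle inequality in $|\cdot|_{H^1(\G)}$. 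I expect the main obstacle to be the eigenfunction estimate rather than the eigenvalue estimate: one must be careful that ``best approximation from a space'' becomes ``approximation from the eigenspace $M(\lambda_h)$'' only after invoking the spectral-gap/cluster-robustness argument of Babuška--Osborn, and that the lifting operator, being only a bijection between triangles and curvilinear triangles, genuinely maps $V_h$-functions to admissible competitors in~\eqref{eq:err-i} — both points are folklore but need the quasi-uniformity of $\cT_h$ and the $\norm{d}_{L^\infty(\G_h)}\lesssim h^2$ hypothesis, exactly the assumptions stated in the lemma.
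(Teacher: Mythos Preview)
Your proposal is correct and the overall strategy is sound, but it differs structurally from the paper's own argument. You factor the error through the intermediate \emph{continuous} eigen-problem~\eqref{eq:pb-eig-dis} on $\G_h$, so that you can invoke standard Babu\v{s}ka--Osborn theory on the fixed polygonal surface (where $V_h\subset H^1(\G_h)$ is genuinely conforming) and then separately control the $\G$-vs-$\G_h$ perturbation of the continuous spectra via min--max and the lifting lemma. The paper instead bypasses the intermediate problem entirely: it defines a single elliptic projection $\Pi_h:H^1(\G)\to V_h$ by the mixed relation $a_h(\Pi_h u,v)=a(u,\wt v)$, and works directly with the lifted approximability measure $\d_h(\lambda)$ from~\eqref{eq:err-i}, so that the geometric and discretization errors are absorbed in one step using the $a_h\leftrightarrow a$ bridging identities of Demlow--Dziuk. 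For the eigenfunction estimate the paper then takes $w_h$ to be the $b_h$-projection of $\Pi_h u$ onto the discrete eigenspace and expands $\snorm{u-\wt w_h}_{H^1(\G)}^2$ explicitly, reducing matters to an $L^2$ bound handled by a projection argument. Your two-layer route is arguably cleaner conceptually---each layer is textbook---but it obliges you to establish the $H^1$ eigenfunction perturbation between the two \emph{continuous} problems on $\G$ and $\G_h$, which you correctly flag as the delicate point; the paper's direct route trades that step for careful bookkeeping of the $O(h^2)$ cross terms in the expansion of $\snorm{u-\wt w_h}^2$.
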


\section{A two-grid eigensolver}\label{ssec:tgES}

In this section, a two-grid algorithm to accelerate the approximation of 
\eqref{eq:pb-eig-fea} exploiting a coarse-fine hierarchical structure is presented 
in Algorithm \ref{alg:tg}. Algorithm \ref{alg:tg} is an extension to the surface 
case of the two-grid methods introduced 
in~\cite{HuCheng11ieig,ZhouHul14MaxEig,XuZhou99eig}. 

Assume that a pair of near hierarchical meshes are given, where the fine mesh 
$\cT_h$ is uniformly refined from the coarse mesh $\cT_H$. The 
newly created vertices, which are the midpoints of the edges of $\cT_H$, are 
projected onto the continuous surface $\G$. Denote the finite element approximation 
space \eqref{eq:sp-fe} on $\cT_H$ by $\mathcal{V}_H$, and the fine 
space by $\mathcal{V}_h$.

Under this setting, we construct a two-grid solution $u^h$ to approximate $u_h$ that 
is sought through a direct solve. First the subscript $u_H\in\mathcal{V}_H$ is 
obtained by a direct eigensolve of eigenvalue problem \eqref{eq:pb-eig-fea} (a nonlinear 
problem). Then the superscript $u^h$ in \eqref{eq:pb-f-s} of Algorithm \ref{alg:tg}
is obtained by a source problem approximation using $u_H$'s prolongation as the 
right hand side data in the finer space $\mathcal{V}_h$.

An important difference in the surface case as considered in this paper, when being 
compared with previous works for the Laplacian eigenvalue problem on the plane, is that the projection operators need extra 
care.  When the mesh is refined, the finite element space on the coarse 
mesh is not a subspace of the fine mesh. The natural inclusion $\mathcal{V}_H 
\not\subset \mathcal{V}_h$ does not hold (see Figure \ref{fig:tg-g}).

\begin{figure}[htbp]
  \centering
\begin{subfigure}[b]{0.3\linewidth}
    \centering
    \includegraphics[width=0.9\textwidth]{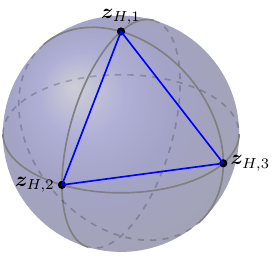}
    \caption{\label{fig:tg-g1}}
\end{subfigure}%
\hspace{0.1in}
\begin{subfigure}[b]{0.3\linewidth}
      \centering
      \includegraphics[width=0.9\textwidth]{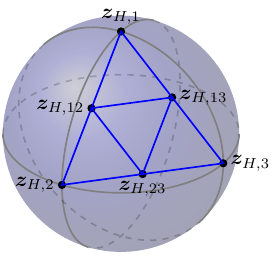}
      \caption{\label{fig:tg-g2}}
\end{subfigure}
\hspace{0.1in}
\begin{subfigure}[b]{0.3\linewidth}
      \centering
      \includegraphics[width=0.9\textwidth]{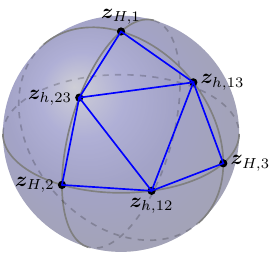}
      \caption{\label{fig:tg-g3}}
\end{subfigure}
\vspace{-0.1in}
\caption{The illustration of the prolongation operator from 
the coarsest octahedral mesh to one level finer on a 2-sphere: (\subref{fig:tg-g1}) Blue triangle is an element on the coarse mesh to approximate the geometry 
of a 2-sphere. A function $w\in \mathcal{V}_H$ has value $w(\vz_{H,i})$ at $\vz_{H,i}$, 
$i=1,2,3$. (\subref{fig:tg-g2}) Uniform refinement by connecting 3 midpoints of each edge. Standard 
prolongation yields the prolongated function's value at $\vz_{H,ij}$ which is 
$(w(\vz_{H,j})+ w(\vz_{H,i}))/2$. (\subref{fig:tg-g3}) Newly created vertices $\vz_{H,ij}$ being projected onto the surface as the vertices for the fine mesh. }
\label{fig:tg-g}
\end{figure}

Consider the geometric projection operator $\cG_{H\to \wh{H}}$ such that 
\begin{equation}
\label{eq:proj}
\wh{V}_{{H}} := \cG_{H\to \wh{H}} \mathcal{V}_H \subset \mathcal{V}_h.
\end{equation}
The definition of $\cG_{H\to \wh{H}}$ is then given by: for $\vz\in 
\cN_h$ and $\wh{w}_H\in \mathcal{V}_h$,
\begin{equation}
\label{eq:proj-1}
\wh{w}_H(\vz) = \cG_{H\to \wh{H}}w_H(\vz) = 
\begin{cases}
w_H(\vz) & \text{ if } \vz\in \cN_H,
\\[3pt]
\bigl(w_H(\vz_{H,1}) + w_H(\vz_{H,2})\bigr)/2 & 
\text{ if } \vz
\in \cN_h\backslash \cN_H,
\end{cases}
\end{equation}
where in the second case, $\vz$ is the projected midpoint 
between the coarse mesh vertices $\vz_{H,1}$ and $\vz_{H,2}$ using the lifting map 
\eqref{eq:lift} (see Figure \ref{fig:tg-g} (c)).
 
The prolongation operator (or natural inclusion) $\cP_{h}$ can then be defined as
\[
\cP_{h}: \mathcal{V}_H \to \mathcal{V}_h,\; w_H \mapsto \wh{w}_H.
\]
Suppose that the finite element approximation spaces have the following basis 
set
\[
\mathcal{V}_H = \operatorname{span} \{ \phi_{H,i}\}_{i=1}^{N_H},
\text{ and }
\mathcal{V}_h = \operatorname{span} \{ \phi_{h,i}\}_{i=1}^{N_h}.
\]
If $w_H = (W_H)^T \varPhi_H$,  where $\varPhi_H = 
(\phi_{H,1},\dots,\phi_{H,N_H,})^T$, then
\[
\mathcal{V}_h \ni \cP_{h} w^H = \cP_{h} \Big((W_H)^T \varPhi_H\Big) = (P_h W_H)^T 
\varPhi^h,
\]
where $P_h \in \RR^{ N_h\times N_H} $ is the matrix representation of the 
prolongation operator.  Note that the geometric projection is 
implicitly imposed here. 

For example, by \eqref{eq:proj-1}, 
$\cP_{h}\phi_{H,i}(\vz_{H,j}) = \delta_{ij}$, 
and $\cP_{h}\phi_{H,i}(\vz_{h,ij}) = 1/2$ if $i\neq j$, where $\vz_{h,ij}$ is a 
newly created vertex in $\cN_h\backslash \cN_H$ by projecting the midpoint of 
$\vz_{H,i}$ and $\vz_{H,j}$ onto the continuous surface (see Figure 
\ref{fig:tg-g} (c)).

Similarly, the restriction operator $\cP_{H}$ is opted to be the transpose of the 
geometric projection $\cP_{h}$ defined above:
\[
\cP_{H}: \mathcal{V}_h \to \mathcal{V}_H, \; w_h \mapsto \cP_{H} w_h,
\;\text{ and }\;
\mathcal{V}_H\ni \cP_{H} w_h =  (P_H W_h)^T 
\varPhi_H,
\]
where $P_H = (P_h)^T\in \RR^{ N_H\times N_h}$ is the matrix of the 
restriction operator.

\begin{algorithm}[t]
\caption{A two-grid scheme with a spectral shift.}
\label{alg:tg}
\begin{algorithmic}[1]
\State{\textbf{Coarse grid eigensolve}}
\item[] Set a fixed shift \rrevision{$\mu_H :=\mu\geq 0$}, and find $(u_H,\lambda_H)$ satisfying
\begin{equation}
\label{eq:pb-c-eigs}
a_{H} (u_H ,v) -\mu_H  b_H (u_{H}, v)= 
\lambda_H b_H(u_H, v), \quad \text{ for any } v\in \mathcal{V}_H.
\end{equation}

\State {\textbf{Fine grid source approximation}}
\item[] Refine $\cT_H$ and perform the geometric projection to get $\cT_h$, 
construct $\mathcal{V}_h$, approximate $u^h$ by solving the following indefinite source problem on 
$\cT_h$:
\begin{equation}
\label{eq:pb-f-s}
a_{h} (u^h ,v ) - (\mu_H+\lambda_H) b_h(u^h,v) = b_h(\cP_{h} u_H, v), 
\quad \text{ for any } v\in \mathcal{V}_h.
\end{equation}
\end{algorithmic}
\end{algorithm}

Given these definitions, the 
two-grid method approximating the exact solution of problem 
\eqref{eq:pb-eig-fea} is given by Algorithm \ref{alg:tg}.

\begin{remark}[Natural extension to a multilevel method]
\label{rmk:tg-cascade}
When multiple 
levels of meshes are available ($V_{h_k}$ for $k = 1,\dots,K$ with $K\geq 3$), 
Algorithm \ref{alg:tg} can be naturally extended to 
be a multilevel method by being applied in a cascading fashion between two adjacent 
levels. For example, starting from $V_{h_1}$, when a two-grid eigenpair 
approximation $(u^{h_2},\lambda^{h_2})$ is obtained, we set 
$(u_{h_2},\lambda_{h_2})\gets (u^{h_2},\lambda^{h_2})$. Then step 2 and step 3 in 
Algorithm \ref{alg:tg} are repeated for level 3 to level $K$.
\end{remark}
\medskip

\begin{remark}[Approximation accuracy of the source problem]
In Algorithm \ref{alg:tg}, the source problem \eqref{eq:pb-f-s} can be approximated 
by a direct or multilevel method. We note that if a multilevel hierarchy exists and 
the two-grid method is applied in a cascading fashion, then numerically the 
source problem on $\mathcal V_{h_k}$ of the $k$-th level ($k\geq 3$) 
does not necessarily require to be solved at an accuracy of a direct solver. This implies that a smoother 
(relaxation method) can be applied to problem \eqref{eq:pb-f-s} in Step 2 of 
Algorithm \ref{alg:tg}, with the approximation from the previous level as initial 
guess, instead of a direct solve. Below, in the Algorithms 
\ref{alg:mg-correction} and \ref{alg:mg-vcycle}, when the term ``approximate'' is 
used for the source problems, the user can choose a direct/multigrid solver 
or smoother. We illustrate this numerically below in Section 
\ref{sec:n-rs}.
\end{remark}

\subsection{Convergence analysis}

The main ingredient in proving the convergence of Algorithm \ref{alg:tg} 
is to bridge the connection 
between the geometric projection on the surface \cite{Dziuk2013finite} with 
existing two-grid convergence results (e.g., see \cite{HuCheng11ieig,ZhouHul14MaxEig}) for the Laplace eigenvalue problem on the plane. 
The error introduced by the projection between non-hierarchical spaces that arises 
in this setting is accounted for in the final estimate we 
derive.

We use the following lemmas to obtain the convergence estimate of the 
two-grid method for the Laplace-Beltrami eigenvalue problem.  The first lemma, Lemma 
\ref{lem:infsup} (e.g. see \cite{babuvska1989finite,HuCheng11ieig}), gives the 
stability estimate for the discrete shifted problem.

\begin{lemma}[Discrete inf-sup condition with a shift]
\label{lem:infsup}
If $\mu$ is not an exact eigenvalue to problem \eqref{eq:pb-eig-fea}, then there 
exists a constant $C(\mu)$ such that
\[
\sup_{v\in \mathcal{V}_h} \frac{\abs{a_h(u_h,v) - \mu b_h(u_h,v) }}{\snorm{v}_{H^1(\G_h)}} 
\geq C(\mu) \snorm{u_h}_{H^1(\G_h)}, \quad \forall u_h\in \mathcal{V}_h
\]
\end{lemma}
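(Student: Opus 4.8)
The plan is to establish the discrete shifted inf-sup condition by a standard spectral-decomposition argument adapted to the discrete surface $\G_h$. First I would recall that the discrete eigenproblem \eqref{eq:pb-eig-fea} is a well-posed self-adjoint eigenproblem (as established earlier from the Hilbert-Schmidt theory and the coercivity/continuity estimates \eqref{eq:pb-wp}), so that $V_h$ admits a basis of eigenfunctions $\{\phi_{h,k}\}$ with $a_h(\phi_{h,i},\phi_{h,j}) = \lambda_{h,i}\d_{ij}$ and $b_h(\phi_{h,i},\phi_{h,j}) = \d_{ij}$. Any $u\in V_h$ can then be written $u = \sum_k c_k \phi_{h,k}$, and the bilinear form $a_h(u,v) - \mu b_h(u,v)$ acts diagonally: expanding $v = \sum_k d_k \phi_{h,k}$ gives $a_h(u,v) - \mu b_h(u,v) = \sum_k (\lambda_{h,k} - \mu) c_k d_k$.

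The key step is then the choice of test function. Given $u = \sum_k c_k \phi_{h,k}$, I would choose $v^\ast := \sum_k (\lambda_{h,k} - \mu) c_k \phi_{h,k} \in V_h$, so that $a_h(u,v^\ast) - \mu b_h(u,v^\ast) = \sum_k (\lambda_{h,k} - \mu)^2 c_k^2$. Since $\mu$ is not an exact eigenvalue of \eqref{eq:pb-eig-fea}, we have $|\lambda_{h,k} - \mu| \geq \gamma(\mu) := \operatorname{dist}(\mu, \{\lambda_{h,k}\}) > 0$ for all $k$ — here one uses that the discrete spectrum is a discrete set with no finite accumulation point, so the distance is attained and strictly positive. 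Combining with the equivalence between $\snorm{\cdot}_{H^1(\G_h)}$ and the energy norm (from the coercivity \eqref{eq:pb-wp} applied on $H^1(\G_h)/\RR$, plus the fact that the $\lambda_{h,k}$ are bounded below away from zero on the complement of the kernel), one bounds $\snorm{v^\ast}_{H^1(\G_h)}^2 \simeq \sum_k \lambda_{h,k}(\lambda_{h,k}-\mu)^2 c_k^2$ and $\snorm{u}_{H^1(\G_h)}^2 \simeq \sum_k \lambda_{h,k} c_k^2$, and a Cauchy-Schwarz estimate of the form $\bigl(\sum_k (\lambda_{h,k}-\mu)^2 c_k^2\bigr)^2 \geq c\,\gamma(\mu)^2\bigl(\sum_k \lambda_{h,k} c_k^2\bigr)\bigl(\sum_k \lambda_{h,k}(\lambda_{h,k}-\mu)^2 c_k^2\bigr)$ — or more simply a direct termwise estimate — yields
\[
\sup_{v\in V_h}\frac{\abs{a_h(u,v)-\mu b_h(u,v)}}{\snorm{v}_{H^1(\G_h)}} \geq \frac{a_h(u,v^\ast)-\mu b_h(u,v^\ast)}{\snorm{v^\ast}_{H^1(\G_h)}} \geq C(\mu)\snorm{u}_{H^1(\G_h)}.
\]
One minor care point: the seminorm $\snorm{\cdot}_{H^1(\G_h)}$ degenerates on constants (the $\lambda_{h,0}=0$ mode), so the estimate should be read with $u$ ranging over (or its component in) $H^1(\G_h)/\RR$, consistent with how the two-grid algorithm enforces orthogonality to the constant eigenfunction; this is the implicit convention of the surrounding text.

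The main obstacle I anticipate is not the algebra — which is routine once the spectral basis is in hand — but rather bookkeeping the dependence of the constant $C(\mu)$ on $\mu$ and on $\gamma(\mu) = \operatorname{dist}(\mu,\sigma_h)$, and verifying that this dependence is uniform in $h$ in the regime where the discrete eigenvalues $\lambda_{h,k}$ stay uniformly separated from $\mu$ (guaranteed by the a priori estimate \eqref{eq:est-ev} of Lemma \ref{lem:apriori} provided $\mu$ is not an eigenvalue of the continuous problem and $h$ is small). Since the statement as given only claims existence of \emph{some} $C(\mu)$ for fixed $h$ with $\mu\notin\sigma_h$, the cleanest route is to prove it for fixed $h$ directly from the finite spectral sum, and then remark that under the standard non-resonance assumption on $\mu$ the constant can be taken $h$-independent for $h$ small. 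I would keep the written proof short, citing \cite{babuvska1989finite,HuCheng11ieig} for the planar template and only indicating the surface modification, namely that all identities hold verbatim on $\G_h$ because $a_h,b_h$ are genuine (not merely approximate) bilinear forms on the polygonal surface and \eqref{eq:pb-wp} supplies the norm equivalence.
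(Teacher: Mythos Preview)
The paper does not actually prove this lemma: it is stated as a known result with citations to \cite{babuvska1989finite,HuCheng11ieig} and no proof is given in the text. Your spectral-decomposition argument is correct and is precisely the standard route one finds in those references, so your proposal is consistent with what the paper intends; in particular your handling of the constant mode and the remark on $h$-uniformity of $C(\mu)$ via Lemma~\ref{lem:apriori} are appropriate refinements that the paper leaves implicit.
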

The next lemma, Lemma \ref{lem:eigenid}, is an important identity used to prove 
the rate of convergence for the approximation of a certain eigenvalue (e.g., see 
\cite{babuvska1989finite}).
\begin{lemma}
\label{lem:eigenid}
Let $(u,\lambda) \in H^1(\G)\times \RR^+ $ be an eigenpair for problem 
\eqref{eq:pb-eig-weak}, then for any $w\in H^1(\G)\backslash\{0\}$
\begin{equation}
\frac{a( w, w)}{b( w, w)} - \lambda = \frac{a( w- u, w- u)}{b( w, w)} -
\lambda \frac{b( w- u, w- u)}{b( w, w)}.
\end{equation}
\end{lemma}

We then define the following measure of $\mathcal{V}_h$'s 
approximation power,
\begin{equation}
\label{eq:eta}
\eta(h) :=  \sup_{f\in H^1(\G_h), \snorm{f}_{H^1} = 1} \inf_{v\in \mathcal{V}_h}
\snorm{Tf - v}_{H^1(\G_h)},
\end{equation}
where the operator $T$ is defined as:
\begin{equation}
a_h(Tf,v) = b_h(f,v), 
\quad \forall f\in H^1(\G_h),\;\text{ and }\;\forall \,v\in H^1(\G_h).
\end{equation}

For finite element eigenvalue problems on 
a convex planar domain $\Omega\subset \RR^2$, one can assume that the eigenspace 
$M(\lambda)$ has certain regularity, e.g., one assumes that 
$M(\lambda)\subset H^2(\Omega)$, which is the Sobolev space containing functions 
with second weak derivatives that are $L^2$-integrable. As a 
result, 
it is shown in \cite{HuCheng11ieig} that, if hierarchical coarse and fine finite 
element spaces are used, i.e., $V_{H} \subset \mathcal{V}_h$, the general estimate 
for the two-grid approximation reads for the linear finite element approximation:  
\begin{equation}
\label{eq:est-tg}
\min_{\alpha\in \RR} \norm{u - \alpha u^h}_{H^1(\G_h)} \leq 
C(\lambda) \left(h + H^4\right),
\text{ and }\quad |\lambda - \lambda^h|\leq 
C(\lambda) \left( h^2 + H^8\right).
\end{equation}
However, in the case of surface finite element here, Theorem 3.3 in 
\cite{HuCheng11ieig} cannot be directly applied due to the facts that (i) only 
$H^1(\G_h)$, not $H^2(\G_h)$ is well-defined on a piecewise linear triangulation 
$\G_h$ (see e.g., \cite[Section 2]{Dziuk1988finite}), where $H^2(\G_h)$ stands for, 
when being well-defined:
\begin{equation}
\label{eq:sp-h2}
H^2(\G_h) : = \big\{ \bar v\in H^1(\G_h): (\nabla_{\G} \bar v)_i \in H^1(\G_h), \; 
i=1,2,3 \big\};
\end{equation}
(ii) when refining, the spaces are not hierarchical, i.e., $V_{H} 
\not\subset \mathcal{V}_h$ (see Figure 
\ref{fig:tg-g}).

In the rest of this subsection, a modified two-grid convergence proof 
is presented following the method from \cite{ZhouHul14MaxEig}, and similar bounds 
are obtained as the standard two-grid results in \eqref{eq:est-tg} given 
that the geometric error is of $O(h^2)$.

\begin{theorem}[Convergence of the two-grid method]
\label{th:tg}
Let $\lambda_H$ be an approximation to the eigenvalue $\lambda$ of problem 
\eqref{eq:pb-eig-weak} satisfying the a priori estimate in Lemma \ref{lem:apriori}. 
\rrevision{Consider an approximation $u^h$ obtained from the two-grid method 
given in Algorithm \ref{alg:tg} with $\mu = 0$ on $\G_h$ with $h$ sufficiently 
small, and $\lambda^h:= a_h(u^h,u^h)/b_h(u^h,u^h)$.} Then, there exists an eigenfunction $u\in M(\lambda)$ such that the following estimates 
hold
\begin{equation}
\label{eq:est-tgs}
\min_{\a\in \RR}\left\vert{u - \a\wt{u}^h}\right\vert_{H^1(\G)} 
\leq C(\lambda) \bigl(h + H^4 \bigr),
\quad
\text{and} \quad |\lambda - \lambda^h|\leq 
C(\lambda) \bigl( h^2 + H^8\bigr).
\end{equation} 
\end{theorem}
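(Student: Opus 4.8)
The plan is to reduce the surface problem to the planar two-grid estimate of Lemma~\ref{lem:tg-o} by absorbing the non-hierarchical projection error and the geometric error into lower-order perturbations, following the strategy of~\cite{ZhouHul14MaxEig}. First I would set up the bookkeeping: since $V_H \not\subset V_h$, I introduce the projected coarse space $\wh V_H = \cG_{H\to\wh H}V_H \subset V_h$ and compare the two-grid iterates driven by $V_H$ (via the prolongation $\cP_h$ in step~2 of Algorithm~\ref{alg:tg}) with the ``ideal'' iterates one would obtain using a genuinely nested pair. The key quantitative input here is that the geometric projection only perturbs nodal values of a coarse function at the new (edge-midpoint) vertices, and since those perturbations are controlled by $\norm{d}_{L^\infty(\G_H)}\lesssim H^2$, the operator $\cP_h$ differs from a nested inclusion by a term of size $O(H^2)$ in the relevant norms; combined with Lemma~\ref{lem:lift}, the lifted and unlifted $H^1$-seminorms agree up to a factor $1\pm ch^2$. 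So any place where Lemma~\ref{lem:tg-o} uses $V_H\subset V_h$ can be re-run with an $O(H^2)$ (respectively $O(h^2)$) additive error, which is higher-order relative to the $h$ and $H^4$ terms we are targeting.

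Next I would handle the fact that $H^2(\G_h)$ is not available, so $\eta(h)$ and $\d_h(\lambda)$ cannot be bounded by a Bramble--Hilbert argument on second derivatives. Instead I use the Cl\'ement-type quasi-interpolation on the surface (\cite{Demlow2007adaptive}, Lemma~2.2, as already invoked in the proof of Lemma~\ref{lem:apriori}), which gives $\d_h(\lambda)\lesssim h$ and $\eta(h)\lesssim h$ directly from $H^1$-regularity of the eigenfunctions transported through the lifting, using that the metric distortion tensor is $1$ plus a higher-order term. With these two bounds in hand, I substitute $\eta(h)=ch$, $\eta(H)=cH$, $\d_h(\lambda)\lesssim h$, $\d_H(\lambda)\lesssim H$ into the general estimate of Lemma~\ref{lem:tg-o} (now valid up to the extra $O(H^2)$ geometric terms): the four terms $\eta(h)\d_h(\lambda)$, $\d_H^2\d_h\eta(h)$, $\d_H^3\eta(H)$, $\d_h(\lambda)$ collapse to $O(h^2)+O(H^5)+O(H^4)+O(h)=O(h+H^4)$, and squaring gives $O(h^2+H^8)$ for the eigenvalue. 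Finally, to pass from the $H^1(\G_h)$-seminorm estimate to the $H^1(\G)$-seminorm estimate claimed in~\eqref{eq:est-tgs}, I apply Lemma~\ref{lem:lift} to $u^h$ and note $\bigl\vert u - \a\wt u^h\bigr\vert_{H^1(\G)} \le (1+ch^2)\bigl\vert \bar u - \a u^h\bigr\vert_{H^1(\G_h)} + \bigl\vert u - \wt{\bar u}\bigr\vert_{H^1(\G)}$, and the last term is itself $O(h^2)$ by the geometric estimates; the eigenvalue bound needs no such adjustment since $\lambda^h$ is computed directly from the Rayleigh quotient on $\G_h$ and Lemma~\ref{lem:eigenid} converts the seminorm error into the eigenvalue error.

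The main obstacle I anticipate is making the ``$O(H^2)$ perturbation from the geometric projection'' rigorous and showing it genuinely enters only as a lower-order additive term rather than multiplying the dominant $O(h)$ contribution in a way that spoils optimality. Concretely, in step~2 of Algorithm~\ref{alg:tg} the right-hand side is $b_h(\cP_h u_H, v)$, so I need that $\cP_h u_H$ approximates (the restriction of) the continuous eigenfunction $u$ in $L^2(\G_h)$ with the same $O(H)$ rate that $u_H$ does on $\G_H$, plus that the shifted fine-grid source solve inherits the inf-sup stability of Lemma~\ref{lem:infsup} uniformly as $h\to 0$ with the perturbed shift $\mu+\lambda_H=\lambda_H$ (since $\mu=0$). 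The delicate point is that $\lambda_H$ is only an $O(H^2)$-approximation to $\lambda$, so $\lambda_H$ may coincidentally be close to a spurious or neighbouring discrete eigenvalue of the fine problem; I would control this by the spectral gap assumption implicit in Lemma~\ref{lem:apriori} and the fact that for $H$ small enough $\lambda_H$ stays within an $O(H^2)$-neighbourhood of $\lambda$, which is separated from the rest of the fine spectrum by an $h$-independent constant once $h$ is small. Assembling these estimates carefully, with the geometric errors tracked through Lemma~\ref{lem:lift} at every step, yields~\eqref{eq:est-tgs}.
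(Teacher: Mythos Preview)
Your proposal takes a genuinely different route from the paper. You treat Lemma~\ref{lem:tg-o} as a black box and argue that the non-nestedness of $V_H$ in $V_h$ and the metric distortion can be absorbed as $O(H^2)$ perturbations before substituting $\eta(h)\lesssim h$, $\d_h(\lambda)\lesssim h$. The paper instead bypasses Lemma~\ref{lem:tg-o} entirely: it introduces the rescaled auxiliary function $\wh{u}^h := (\lambda_h-\lambda_H)\,u^h$, derives the explicit error equation
\[
a_h(u_h-\wh{u}^h,v) - \lambda_H\, b_h(u_h-\wh{u}^h,v) = (\lambda_h-\lambda_H)\, b_h(u_h-\cP_h u_H,v),
\]
and then applies the discrete inf-sup condition (Lemma~\ref{lem:infsup}) directly. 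The $H^4$ rate falls out as a \emph{product}: one factor $|\lambda_h-\lambda_H|\lesssim H^2$ from the a~priori estimate, and another factor $\lesssim H^2$ from bounding $\sup_v b_h(u_h-\cP_h u_H,v)/|v|_{H^1}$ via the splitting through the restricted true eigenfunction $\bar u$ and the bounds $\eta(h)\lesssim h$, $\eta(H)\lesssim H$. The passage to $\G$ and the eigenvalue estimate via Lemma~\ref{lem:eigenid} are then done essentially as you describe.

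The advantage of the paper's route is that it never needs the hypothesis $V_H\subset V_h$ of Lemma~\ref{lem:tg-o}, so there is nothing to ``re-run with perturbations''; the prolongated datum $\cP_h u_H$ appears only on the right-hand side of the error equation, where it is compared to $u_h$ through $\bar u$ using triangle inequality and Lemma~\ref{lem:lift}. Your route is viable in spirit, but the statement that the $O(H^2)$ geometric perturbation is ``higher-order relative to the $h$ and $H^4$ terms'' is not literally true ($H^2$ dominates $H^4$), so you would have to open up the proof of Lemma~\ref{lem:tg-o} anyway and show that the perturbation enters multiplicatively at the data level rather than additively at the final-error level---at which point you are essentially carrying out the paper's direct argument with extra scaffolding. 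The paper's rescaling trick $\wh u^h=(\lambda_h-\lambda_H)u^h$ is the cleaner mechanism for isolating that multiplicative structure.
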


\begin{proof}
The proof follows from the ones in \cite{HuCheng11ieig,ZhouHul14MaxEig}. Assume that the 
coarse approximation $\lambda_H$ is not an eigenvalue of the 
discrete eigenvalue problem \eqref{eq:pb-eig-fea} on the fine mesh. 
Consider an auxiliary solution 
$\wh{u}^h = (\lambda_h - \lambda_H)u^h$, where $u^h$ solves 
problem \eqref{eq:pb-f-s} with $\mu = 0$, then it can be verified that this 
$\wh{u}^h$ satisfies:
\[
a_{h} (\wh{u}^h ,v ) - \lambda_H b_h(\wh{u}^h,v)
= (\lambda_h-\lambda_H) b_h(\cP_h {u}_H, v), 
\quad \forall\, v\in \mathcal{V}_h. 
\]
In the equation above, $(u_h,\lambda_h)$ is an eigenpair obtained from the direct solve for problem \eqref{eq:pb-eig-fea}, i.e., $a_h(u_h,v) 
= \lambda_h b_h(u_h,v)$ for any $v \in \mathcal{V}_h$. Taking the 
difference of these two equations yields: for any $v \in \mathcal{V}_h$ 
\begin{equation}
a_h(u_h - \wh{u}^h,v) - \lambda_H b_h(u_h- \wh{u}^h, v)
= (\lambda_h - \lambda_H) b_h(u_h - \cP_h u_H,v).
\end{equation}
Applying the discrete inf-sup stability estimate in Lemma \ref{lem:infsup}, we have
\begin{equation}
\label{eq:err-u}
\begin{aligned}
C \snorm{u_h - \wh{u}^h}_{H^1(\G_h)} 
&\leq \sup_{v\in \mathcal{V}_h} \frac{\abs{a_h(u_h - \wh{u}^h,v) - \lambda_H b_h(u_h - 
\wh{u}^h,v) }}{\snorm{v}_{H^1(\G_h)}} 
\\
&= \sup_{v\in \mathcal{V}_h} \frac{\abs{(\lambda_h - \lambda_H) b_h(u_h - \cP_h 
u_H,v)}}{\snorm{v}_{H^1(\G_h)}} .
\end{aligned}
\end{equation}
By the triangle inequality and the a priori estimate from Lemma \ref{lem:apriori}, 
we have
\begin{equation}
\abs{\lambda_h - \lambda_H} \leq \abs{\lambda - \lambda_h} + \abs{\lambda - 
\lambda_H} \leq C(\lambda) H^2.
\end{equation}
Now we restrict the true eigenfunction $u\in M(\lambda)$ with its continuous mapping 
$\bar{u}$ to the discrete surface $\G_h$ or $\G_H$, and use the definition of $\eta(\cdot)$, the direct solve achieving the best approximation (Lemma \ref{lem:apriori}), together with the 
geometric error (Lemma \ref{lem:lift}):
\begin{equation}
\begin{aligned}
& \sup_{v\in \mathcal{V}_h} \frac{b_h(u_h - \cP_h u_H,v)}{\snorm{v}_{H^1(\G_h)}}
\leq  \sup_{v\in H^1(\G_h)} \frac{b_h(u_h - \bar{u},v)}{\snorm{v}_{H^1(\G_h)}}
+ \sup_{v\in H^1(\G_h)} \frac{b_h(\bar{u} - \cP_h u_H,v)}{\snorm{v}_{H^1(\G_h)}}
\\ \leq \;& c_1(\lambda) \eta(h) \snorm{u_h - \bar{u}}_{H^1(\G_h)} + 
c_2(\lambda) \eta(H) \snorm{\bar{u} - u_H}_{H^1(\G_H)} +c_3 \eta(H) H^2.
\end{aligned}
\end{equation}
Now for $\eta(h)$ and $\eta(H)$ define in \eqref{eq:eta}, since restricting on each 
element $T$, the eigenfunction is smooth, then by the interpolation estimate in  
\cite[Lemma 2.2]{Demlow2007adaptive}, we have
\begin{equation}
\eta(h) = \sup_{f\in H^1(\G_h), \snorm{f}_{H^1} = 1} 
\inf_{v\in \mathcal{V}_h}\snorm{Tf - v}_{H^1(\G_h)} \leq C h,
\end{equation}
where $C$ only depends on the geometry. Using this result, \eqref{eq:err-u}, and the 
a priori estimate for the direct solve eigenfunction approximation, we have:
\begin{equation}
\snorm{u_h - \wh{u}^h}_{H^1(\G_h)} \leq C(\lambda)H^4.
\end{equation}
Then apply the geometric error estimates \eqref{eq:err-geom} for both $\wh{u}^h$ and 
$u_h$,
\begin{equation}
\big| u - \wt{\wh{u}^h} \big|_{H^1(\G)} \leq \snorm{u - \wt{u}_h}_{H^1(\G)} + 
c(\lambda)h^2 + \snorm{u_h - \wh{u}^h}_{H^1(\G_h)} \leq C(\lambda) (h + H^4),
\end{equation}
where $h$ is assumed to be small enough such that the geometric error, which is 
$O(h^2)$, can be omitted compared with the $O(h)$ term. 

Lastly, using the fact that $\wh{u}^h = (\lambda_h - \lambda_H)u^h$ we have
\begin{equation}
\lambda^h = \frac{a_h(u^h,u^h)}{b_h(u^h,u^h)} = 
\frac{a_h(\wh{u}^h,\wh{u}^h)}{b_h(\wh{u}^h,\wh{u}^h)}.
\end{equation}
By Lemma \ref{lem:eigenid}, the eigenfunction to be 
normalized to satisfy $b_h(u^h,u^h)=1$, Poincar\'{e} inequality, and Lemma \ref{lem:lift}, we can get the estimate for the two-grid approximation:
\begin{equation}
\label{eq:est-lambdah}
\begin{aligned}
|\lambda^h - \lambda| &=  \left\vert 
\frac{a_h(u^h,u^h)}{b_h(u^h,u^h)} -\lambda\right\vert
\\
&\leq  \left\vert 
a(\wt{u}^h,\wt{u}^h) -\lambda\right\vert + 
 \left\vert {a_h(u^h,u^h)} -a(\wt{u}^h,\wt{u}^h) \right\vert
\\
& \leq  \left\vert a(\wt{u}^h - u,\wt{u}^h - u) 
-\lambda b(\wt{u}^h - u,\wt{u}^h - u)\right\vert + Ch^2
\\
& \leq \snorm{\wt{u}^h - u}_{H^1(\G)}^2 + \lambda \norm{\wt{u}^h - u}_{L^2(\G)}^2 + Ch^2
\\
& \leq C(\lambda) \snorm{\wt{u}^h - u}_{H^1(\G)}^2 +
Ch^2 \leq C(\lambda) \bigl( h^2 + H^8\bigr).
\end{aligned}
\end{equation}
\end{proof}

\begin{remark}
\label{rmk:tg}
Theorem \ref{th:tg} implies that if the mesh sizes are chosen such that 
$H\lesssim h^{1/4}$ between neighboring levels, then 
the optimal linear rate of convergence for the eigenfunction in 
$\snorm{\cdot}_{H^1(\G)}$ and the quadratic 
convergence for the eigenvalue follow. In our setting, assuming multiple levels of 
meshes (obtained by uniformly refining the mesh from the previous level) and that we 
project the vertices onto the surface, $H\lesssim h^{1/4}$ holds and the estimate 
follows. Assume Algorithm \ref{alg:tg} is applied in a cascading fashion spanning 
multiple levels, then the optimal convergence rates of these two algorithms depend 
on the assumption that the coarse mesh is fine enough, that is, they depend on the assumption that the geometric error is sufficiently small. 
\end{remark}

\section{The Bootstrap Multigrid Method}
In this section, we propose a finite element bootstrap multigrid (BMG) eigensolver 
based on the bootstrap algebraic multigrid (BAMG) framework~\cite{BAMG2010}. The 
key ingredient in the bootstrap approach proposed in this section is the idea 
to continuously enrich the coarse space with computed eigenfunction approximations 
coming from linear solves on finer meshes.

The motivation of the eigensolve in an enriched coarse 
space is to overcome the drawbacks of the standard two-grid method. Essentially, 
the two-grid methods proposed, when using the original geometrically defined coarse 
space, accelerate the eigensolve on the finest mesh.  However, the number of 
correctly approximated eigenpairs by these two-grid method depend on the dimension 
of the coarse space (see the numerical example in Section 
\ref{sec:n-tg}). With the BMG eigensolver, the desired eigenpairs of the 
original Laplace-Beltrami operator can be approximated with same order of accuracy 
as the direct eigensolve achieves on the finest mesh, assuming certain mesh size 
relations are satisfied between consecutive levels (see Remark \ref{rmk:tg}).

\begin{algorithm}[ht]
\caption{A two-grid BMG scheme for approximating eigenfunctions near $\mu$ using an 
enriched coarse space.}
\label{alg:mg-correction}
\begin{algorithmic}[1]
\State{\textbf{Coarse grid eigensolve}}
\item[] Set a coarse grid shift $\mu_{H} = \mu\geq 0$, find 
$(u_{H,i},\lambda_{H,i})\in \mathcal{V}_H\times 
\RR^+$ (for $i = 1,\dots,N$, using direct eigensolve, 
where $N\leq \dim \mathcal{V}_H$) 
satisfying
\begin{equation}
a_H (u_{H,i} ,v) - \mu_{H}  b_H (u_{H,i}, v)= \lambda_{H,i} b_H (u_{H,i}, v), 
\;\; \text{ for any } v\in \mathcal{V}_H.
\end{equation}

\State{\textbf{Choose eigenfunctions for the enrichment}}
\item[] Let the index set for the enrichment candidate eigenfunctions be 
$\Lambda \subset \{1,\dots,N \}$, where if $i\in \Lambda$, 
$|\lambda_{H,i} - \mu| <\texttt{tol}$. Let
\begin{equation}
\label{eq:sp-e}
X_H := \operatorname{span} \{u_{H,i}\}_{i\in \Lambda}.
\end{equation}

\State{\textbf{Fine grid source approximation} }
\item[] Refine $\cT_H$ and perform the geometric projection to get $\cT_h$. 
Approximate $ u^{h,i}\in \mathcal{V}_h$, 
where $i\in \Lambda$, using $u_{H,i} \in X_H$ as the source, in 
\begin{equation}
\label{eq:pb-f-aux}
a_h (u^{h,i} ,v) - \mu_{H}  b_h (u^{h,i}, v)= 
\lambda_{H,i} b_h (\cP_{h} u_{H,i}, v), 
\;\; \text{ for any } v\in \mathcal{V}_h.
\end{equation}
Then orthogonalize ${u}^{h,i}$'s with respect to the inner product 
$b_h(\cdot,\cdot)$, let the enrichment space contain the orthogonalized source 
approximations:
\begin{equation}
X_h := \operatorname{span} \{{u}^{h,i}\}_{i\in \Lambda}.
\end{equation}

\State{\textbf{Coarse grid eigensolve in the enriched space}}
\item[] Set a new shift $\mu_{h}$. Find 
$(u_{h,i},\lambda_{h,i})\in V_{H,h}\times \RR^+$
satisfying, for $i\in \Lambda$:
\begin{equation}
\label{eq:pb-c-aux}
a_{H,h} (u_{h,i} ,v) - \mu_{h} b_{H,h}(u_{h,i}, v) = 
\lambda_{h,i} b_{H,h} (u_{h,i}, v), 
\;\; \text{ for any } v\in V_{H,h},
\end{equation}
where $V_{H,h} := \mathcal{V}_H + X_h$ is the enriched coarse space. Update 
$X_H = \operatorname{span} \{u_{h,i}\}_{i\in \Lambda}$.
\end{algorithmic}
\end{algorithm}

\subsection{The two-grid bootstrap algorithm} 
A two-grid bootstrap algorithm is outlined in Algorithm \ref{alg:mg-correction} and 
illustrated briefly in Figure~\ref{fig:bmg-t}. 
The algorithm takes  
a coarse mesh, a shift 
 $\mu$, and the tolerance $\texttt{tol}$ as inputs.

In the two-grid bootstrap method, the bilinear forms $a_{H,h}(\cdot,\cdot)$ and 
$b_{H,h}(\cdot,\cdot)$ are defined as follows.  Let $w \in \mathcal{V}_H + 
X_h$ be any function such that $w = w_H + w_h$, where $w_H\in \mathcal{V}_H$, and $w_h \in 
X_h$. Then, for any test function $v = v_H + v_h \in \mathcal{V}_H + X_h$ 
\begin{equation}
\begin{aligned}
a_{H,h} (w ,v) := a_H(w_H,v_H) + a_h(\cP_{h} w_H, v_h)
\\ 
+ a_h(w_h, \cP_{h} v_H) + a_h(w_h,v_h),
\\
\text{and }\quad 
b_{H,h} (w ,v) := b_H(w_H,v_H) + b_h(\cP_{h} w_H, v_h) 
\\
+ b_h(w_h, \cP_{h} v_H) + b_h(w_h,v_h).
\end{aligned}
\end{equation}

Now we rewrite some of the key equations in Algorithm \ref{alg:mg-correction} in 
their matrix forms. Problem \eqref{eq:pb-f-aux} can be 
rewritten as: find $U^{h,i} \in \RR^{N_h}$, $i\in \Lambda$
\begin{equation}
\label{eq:pb-f-aux-mat}
( A_h - \mu_{H} M_h) U^{h,i} = \lambda_H M_h (P_h U_{H,i}).
\end{equation}
Here $A_h$ and $M_h$ are the stiffness matrix and mass matrix for the degrees of 
freedom on the fine approximation space $\mathcal{V}_h$, respectively. $U^{h,i}$ is 
the vector representation of $u^{h,i}$ in the 
canonical finite element basis, and its superscript is inherited from $u^{h,i}$. The 
$U_{H,i}$ with the subscript is the vector representation of the direct solve 
solution $u_{H,i}$ in the coarse approximation space $\mathcal{V}_H$. 

Step 4 of Algorithm \ref{alg:mg-correction} is to find $U_{h,i} 
\in \RR^{N_H+|\Lambda|}$, and $\lambda_{h,i} \in \RR^+$, $i\in \Lambda$
\begin{equation}
\label{eq:pb-c-aux-mat}
( A_{H,h} - \mu_{h} M_{H,h})U_{h,i} = \lambda_{h,i} M_{H,h} U_{h,i}.
\end{equation}
The enriched stiffness and mass matrices $A_{H,h}$ and $M_{H,h}$ are in the 
following coarse-fine block form: let ${U}^h := (U^{h,1},\dots, 
U^{h,|\Lambda|}) \in 
\RR^{N_h\times |\Lambda|}$ be the block of all the approximations from 
problem \eqref{eq:pb-f-aux-mat}, and
\[
A_{H,h} = \begin{pmatrix}
A_H & P_H A_h {U}^h 
\\[3mm]
({U}^h)^T A_h P_h & ({U}^h)^T A_h {U}^h
\end{pmatrix},\;
M_{H,h} = \begin{pmatrix}
M_H & P_H M_h {U}^h
\\[3mm]
({U}^h)^T M_h P_h & ({U}^h)^T M_h {U}^h
\end{pmatrix}.
\]

\begin{remark}[Choice of the shifts]
The shifts $\mu_{H}, \mu_{h}\geq 0$ in Algorithm \ref{alg:mg-correction} are added 
in case the user is interested in a 
specific range of the eigenvalues. If one is to find the eigenvalues from the 
smallest one, the shift can be set as $\mu_{H}= \mu_{h} = 0$ for all of the enriched 
coarse eigenvalue problems, and fine source approximation problems. To 
recover interior eigenvalues, \revision{the coarse grid shift $\mu_H$ can be set to be a 
positive number that is near the center of an eigen-cluster of interest. Then the new shift $\mu_h$ is updated using the Rayleigh quotients computed from the fine source approximations.}
\end{remark}

The choice of the set of 
enrichment functions $X_H$ (test 
vectors in the BAMG (\cite{BAMG2010}) setup) with index set $\Lambda$ is related to the eigenspace of 
interest, as well as the resources one can afford. 
As the dimension of $X_H$ increases, the sparsity of $A_{H,h}$ deteriorates resulting a more expensive the fine grid source problem. Thus, we propose that the user shall choose the coarse space accordingly to the spectrum of interest. 
Using approximating the Laplace-Beltrami 
spectrum on a closed surface as an example, 
consider the simple case that the user wants to recover the $l$-th 
eigenpair, $(\lambda_{h,l},u_{h,l})$, where $1\leq l< \dim \mathcal{V}_H$. The 
eigensolve in the coarse space contains the discrete approximations 
$\{(\lambda_{H,i},u_{H,i})\}_{i=1}^{\dim \mathcal{V}_H}$ to these
eigenpairs. Then, the index set for the eigenfunction approximations
$\Lambda$ in \eqref{eq:sp-e} can be chosen as $\{\hat{k}\in \ZZ: l - m\leq \hat{k} 
\leq l\}$.  Here, $m$ is greater than or equal to the geometric multiplicity shown in 
the discrete spectra for the eigenvalue closest to the eigenvalue of interest, say 
$\lambda_{H,l}$, and the tolerance \texttt{tol} is set to be 
the maximum distance between the distinctive eigenvalue clusters. Intuitively, these
choices are motivated by the fact that the 
algorithm should ``detect'' the improvement in the approximations of
the eigenpair $(\lambda_{h,l},u_{h,l})$ of interest. For 
additional discussion of how to set shifts and how to choose the enrichment 
candidates, please refer to the examples in Section \ref{sec:n-bm}.

\begin{theorem}[Error estimates for BMG Algorithm~\ref{alg:mg-correction}]
\label{th:btg}
Assume $\mu\geq 0$ is neither an eigenvalue of problem \eqref{eq:pb-eig-fea} or problem \eqref{eq:pb-eig-dis}, and suppose $u_{H,i}\in X_H$ in \eqref{eq:sp-e} for $i=1,\dots, \dim X_H$ are linearly independent, and there exists a $u_i\in M({\lambda+\mu})$ with the estimates:
\begin{equation}
\label{eq:asp-bootstrap}
\begin{gathered}
|\lambda - \lambda_{H,i}|\leq 
C(\lambda,\mu) H^2, \quad \text{ and }\quad 
\bigl\vert{u_i - \wt{u}_{H,i} }\bigr\vert_{H^1(\G)}  \leq C(\lambda,\mu) H,
\\
\sup_{v\in \mathcal{V}_h/\RR} \frac{b(u_i - 
\wt{\cP_{h} u}_{H,i},\wt{v})}{\snorm{v}_{H^1(\G_h)}}
\leq 
C(\lambda,\mu) H \bigl\vert{u_i - \wt{u}_{H,i} }\bigr\vert_{H^1(\G)}.
\end{gathered}
\end{equation}
After one iteration of Algorithm \ref{alg:mg-correction} with $\mu_H = \mu_h= \mu$ 
under a uniform refinement with $h$ sufficiently small, and an 
exact solve in Step 3, there  
exists a $\wh{u}\in M({\lambda+\mu})$ , such that the 
resultant approximation $(\lambda_{h,i}, u_{h,i})$ has the following error estimates:
\begin{equation}
\label{eq:est-bootstrap}
|\lambda - \lambda_{h,i}|\leq 
C(\lambda, \mu) (h^2 + H^4), \quad \text{ and }\quad 
\bigl\vert{\wh{u} - \wt{u}_{h,i}}\bigr\vert_{H^1(\G)}  \leq C(\lambda, \mu) ( h+H^2).
\end{equation}
\end{theorem}

\begin{proof}
The proof follows from a similar argument in \cite[Theorem 3.4]{Lin2015multi} and uses Lemma \ref{lem:infsup}, with the geometric error estimates from Lemma \ref{lem:lift}. For simplicity first we
consider $\dim X_H = 1$, define a projection $\Pi_h : H^1(\G) \to \mathcal{V}_h$ of a 
$u\in M(\lambda+\mu)$ with respect to the inner product $a_h(\cdot, \cdot)$ as 
follows: 
\begin{equation}
\label{eq:proj-s}
a_h\big(\Pi_h u, v\big) = a(u,\wt{v}), \quad \text{and } \int_{\G_h}\Pi_h u \,dS = \int_{\G} u \,dS = 0, \quad \forall v\in \mathcal{V}_h.
\end{equation}
For $u^h$ from the exact solve of \eqref{eq:pb-f-aux}, by Lemma \ref{lem:infsup}, 
\begin{equation}
c \snorm{u^h - \Pi_h u}_{H^1(\G_h)} \leq \sup_{v\in \mathcal{V}_h} 
\frac{\abs{a_h(u^h - \Pi_h u,v) - \mu b_h(u^h - \Pi_h u,v) }}{\snorm{v}_{H^1(\G_h)}} .
\end{equation}
The numerator on the right hand side above becomes
\[
\begin{aligned}
& \abs{a_h(u^h - \Pi_h u,v) - \mu b_h(u^h - \Pi_h u,v) }
\\
= & \,\big| a_h(u^h,v) - \mu b_h(u^h,v) 
- (\underbrace{a_h(\Pi_h u,v) - \mu b_h(\Pi_h u ,v)}_{=:(\mathfrak{d})}) \big|.
\end{aligned}
\]
The first difference $a_h(u^h,v) - \mu b_h(u^h,v) = \lambda_{H} b_h (\cP_{h} u_{H}, v)$. For $(\mathfrak{d})$ by \eqref{eq:proj-s} and $a(u,\wt{v}) -\mu b(u,\wt{v}) = \lambda b(u,\wt{v})$ we can write it as
\[
\begin{aligned} 
& (\mathfrak{d}) =a(u,\wt{v}) -\mu b_h(\Pi_h u ,v) 
\\
= & \; \lambda b(u, \wt{v}) + \mu b(u, \wt{v}) - \mu b(\wt{\Pi_h u}, \wt{v}) 
+ \mu b(\wt{\Pi_h u}, \wt{v})- \mu b_h(\Pi_h u ,v). 
\end{aligned}
\]
As a result, by triangle inequality we have:
\begin{equation}
\label{eq:pb-tg-infsup}
\begin{aligned}
& \abs{a_h(u^h - \Pi_h u,v) - \mu b_h(u^h - \Pi_h u,v) }   
\\
\leq & \, \underbrace{\abs{ \lambda_{H} b_h (\cP_{h} u_{H}, v) - \lambda b(u, \wt{v})}}_{=:(\rm I)}
\\
& \, +  \underbrace{\bigl| \mu b(u, \wt{v}) - \mu b(\wt{\Pi_h u}, \wt{v})\bigr|}_{=:(\rm I\!I)}
+ \underbrace{\bigl| \mu b(\wt{\Pi_h u}, \wt{v})- \mu b_h(\Pi_h u ,v)\bigr|}_{=:(\rm I\!I\!I)}.
\end{aligned}
\end{equation}
Next we show the estimates for each term on the right of \eqref{eq:pb-tg-infsup}. 
For $(\mathrm{I})$ we have:
\begin{equation}
\label{eq:pb-tg-pf}
\begin{aligned}
& (\mathrm{I})= \big|\lambda_{H} b_h (\cP_{h} u_H, v) -  \lambda b(u,\wt{v}) \big|
\\
\leq 
&\abs{  b\big(\lambda_H \wt{\cP_{h} u}_H - \lambda u,\wt{v}\big)}
+ \abs{b_h\big(\lambda_H \cP_{h} u_H ,v\big) - 
b\big(\lambda_H \wt{\cP_{h} u}_H,\wt{v}\big)}.
\end{aligned}
\end{equation}
For the second term above in \eqref{eq:pb-tg-pf} which is introduced by approximating the geometry, by letting $q = u^h - \Pi_h u \in \mathcal{V}_h$ below, applying \cite[Lemma 4.7]{Dziuk2013finite}, a Poincar\'{e} inequality on $\Gamma_h$ since $\int_{\G_h}(u^h-\Pi_h u) \,dS=0$ (\cite[Remark 5.3]{reusken2014analysis}), and 
discarding higher order terms, we have
\begin{equation}
\label{eq:pb-tg-s}
\begin{aligned}
&\abs{b_h\big(\lambda_H \cP_{h} u_H ,v\big) - 
b\big(\lambda_H \wt{\cP_{h} u}_H,\wt{v}\big)}
\\
\leq & \, \lambda_H 
\sup_{q\in \mathcal{V}_h/\RR}
\frac{\big| b_h\big(\lambda_H \cP_{h} u_H,q\big) - 
b\big(\lambda_H \wt{\cP_{h} u}_H,\wt{q}\big) \big|}{|q|_{H^1(\Gamma_h)}}
\snorm{v}_{H^1(\Gamma_h)}
\\
\leq & \, ch^2 \lambda_H  \frac{\norm{\cP_{h} u_H}_{L^2(\Gamma_h)} 
\norm{q}_{L^2(\Gamma_h) }}{|q|_{H^1(\Gamma_h)}} \snorm{v}_{H^1(\Gamma_h)}
\leq ch^2 \lambda_H \norm{\cP_{h} u_H}_{L^2(\Gamma_h)}
\snorm{v}_{H^1(\Gamma_h)}.
\end{aligned}
\end{equation}
For the first term in \eqref{eq:pb-tg-pf}, applying the geometric error estimate in 
Lemma \ref{lem:lift} such that $\snorm{\wt{v}}_{H^1(\Gamma)} \leq 
(1+ch^2)\snorm{v}_{H^1(\Gamma_h)}$, we 
have:
\begin{equation}
\begin{aligned}
& b\big(\lambda_H \wt{\cP_{h} u}_H - \lambda u,\wt{v}\big)
\leq (1+ch^2)\sup_{q\in \mathcal{V}_h/\RR}
\frac{b\big(\lambda_H \wt{\cP_{h} u}_H - \lambda u,\wt{q}\big)}{|q|_{H^1(\Gamma_h)}}
\snorm{v}_{H^1(\Gamma_h)}
\\
\leq &(1+ch^2) \left( |\lambda-\lambda_H|
\sup_{q\in \mathcal{V}_h/\RR}
\frac{b\big(\wt{\cP_{h} u}_H,\wt{q}\big)}{|q|_{H^1(\Gamma_h)}}
 + \lambda \sup_{q\in \mathcal{V}_h/\RR}
\frac{b\big(\wt{\cP_{h} u}_H - u,\wt{q}\big)}{|q|_{H^1(\Gamma_h)}} \right) 
 \snorm{v}_{H^1(\Gamma_h)}.
\end{aligned}
\end{equation}
Exploiting the technique in \eqref{eq:pb-tg-s} again, and combining estimates above with assumption in \eqref{eq:est-bootstrap} we have reached:
$(\mathrm{I}) \leq C(1+ch^2)^2 H^2\,  \snorm{v}_{H^1(\Gamma_h)}$.
For $(\mathrm{I\!I})$, we have 
\[
(\mathrm{I\!I}) := \bigl| \mu b(u, \wt{v}) - \mu b(\wt{\Pi_h u}, \wt{v})\bigr|
\leq \mu \big\|u - \wt{\Pi_h u} \big\|_{L^2(\Gamma_h)} \norm{v}_{L^2(\Gamma_h)}.
\]
We choose $v = u^h - \Pi_h u \in \mathcal{V}_h$ here thus Poincar\'{e} inequality can be used and 
$ \norm{v}_{L^2(\Gamma_h)} \lesssim \snorm{v}_{H^1(\Gamma_h)}$.
Since the lifting operator does not preserve the integration, Poincar\'{e} inequality (see e.g., \cite[Lemma B.63]{Ern;Guermond:2013Theory}) has to be applied on the first term as follows,
\[
c\big\|u - \wt{\Pi_h u} \big\|_{L^2(\Gamma_h)} \leq \big|u - \wt{\Pi_h u}\big|_{H^1(\Gamma)}
+ |\Gamma|^{-1} \int_{\Gamma} \wt{\Pi_h u}\,dS.
\]
A standard error estimate for the projection $\big|u - \wt{\Pi_h 
u}\big|_{H^1(\G)} \lesssim h |\nabla_{\G}u|_{H^1(\Gamma)}$ (e.g., \cite[Lemma 
2.2]{Demlow2007adaptive}) can be applied to the first term above. The second term above can be estimated by the fact that $\int_{\Gamma_h} \Pi_h u \,dS = 0$,
\[
\int_{\Gamma} \wt{\Pi_h u}\,dS = b(\wt{\Pi_h u}, 1) - b_h({\Pi_h u}, 1) \leq c h^2 |\Gamma|^{1/2} 
\|{\Pi_h u} \|_{L^2(\Gamma_h)}.
\]
As a result, we have $(\mathrm{I\!I}) \leq C\mu(h + h^2) \snorm{v}_{H^1(\Gamma_h)}$. Lastly for 
$(\rm I\!I\!I)$ \cite[Lemma 4.7]{Dziuk2013finite} is applied again as in \eqref{eq:pb-tg-s} we have 
$(\mathrm{I\!I\!I})\leq C\mu h^2 \snorm{v}_{H^1(\Gamma_h)} $.
Applying the triangle inequality, and discarding the higher order term yield
\[
\big|{u - \wt{u^h}\big|}_{H^1(\G)}
\leq \big|{u - \wt{\Pi_h u}\big|}_{H^1(\G)} + \big|{\wt{\Pi_h u} - \wt{u^h}\big|}_{H^1(\G)} \leq C(\lambda,\mu) (h+H^2).
\]
When $\dim X_H >1$, the argument above still applies by the argument above \cite[Theorem 8.1]{Boffi2010finite} since $u^{h,i}$'s are orthogonal with respect to $b_h(\cdot, \cdot)$.
Lastly, we apply the eigenpair approximation results with multiplicity $\geq 1$ 
results in \cite[Page 700 (8.47b) and Theorem 9.1]{babuska-osborn}, for each $i$, we 
have for a $\wh{u}\in M({\lambda})$, $\mathcal{V}_{H,h}:= 
\mathcal{V}_H+\operatorname{span} \{{u}^{h,i}\}$
\[
\bigl\vert{\wh{u} - \wt{u}_{h,i} } \bigr\vert_{H^1(\G)}
\leq \inf_{w\in M(\lambda)} \inf_{v\in \mathcal{V}_{H,h}}
\bigl\vert{w- \wt{v} } \bigr\vert_{H^1(\G)}\leq 
\big|{u_i - \wt{u^{h,i}}\big|}_{H^1(\G)}, 
\]
which, together with the estimate above, gives the first estimate in 
\eqref{eq:asp-bootstrap}. Now using the same 
argument as \eqref{eq:est-lambdah} yields the eigenvalue estimate in \eqref{eq:est-bootstrap}.
\end{proof}

\begin{remark}[Smoothers versus solve in \eqref{eq:pb-f-aux}]
For the simplicity of the presentation, we have assumed that \eqref{eq:pb-f-aux} is solved exactly in the proof of Theorem \ref{th:btg}. \rrevision{Nevertheless, applying similar estimates for the Gauss-Seidel smoother using, e.g., \cite[Lemma 3.9]{Urschel;Hu;Xu;Zikatanov:2014cascadic}, the estimate in \eqref{eq:est-bootstrap} can be shown to yield the same order error under appropriate choice of smoothers.}
\end{remark}

\begin{figure}[htbp]
  \centering
\begin{subfigure}[b]{0.4\linewidth}
    \centering
    \includegraphics[width=0.9\textwidth]{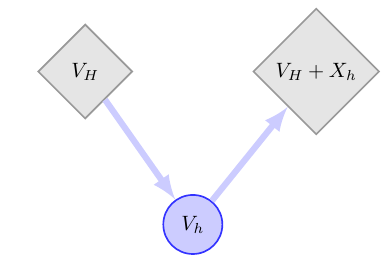}
    \caption{\label{fig:bmg-t}}
\end{subfigure}%
\hspace{0.3in}
\begin{subfigure}[b]{0.45\linewidth}
      \centering
      \includegraphics[width=0.9\textwidth]{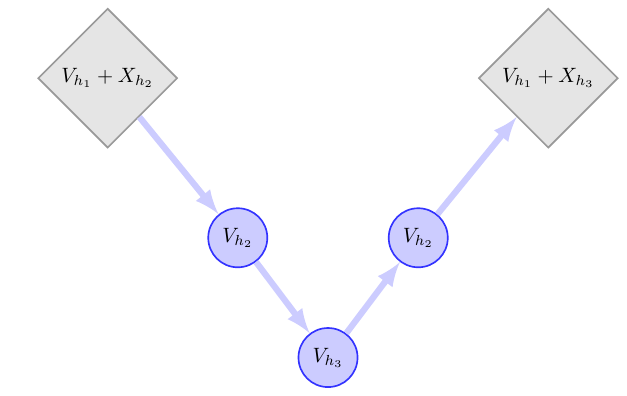}
      \caption{\label{fig:bmg-v}}
\end{subfigure}
\begin{subfigure}[b]{0.8\linewidth}
      \centering
      \includegraphics[width=0.9\textwidth]{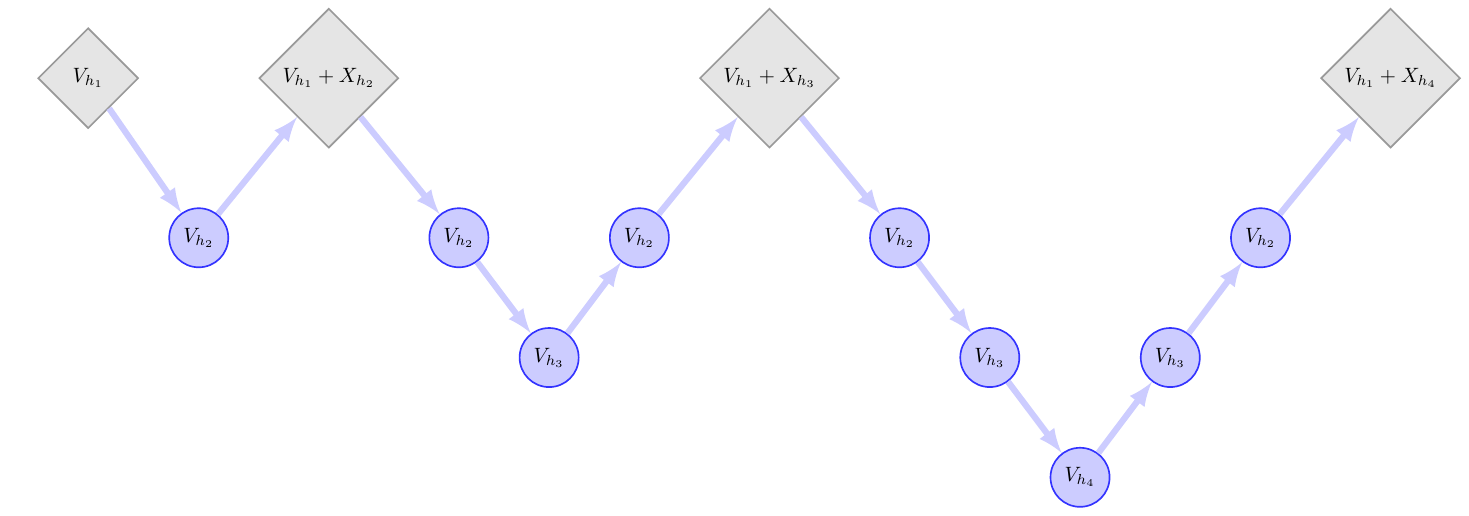}
      \caption{\label{fig:bmg-f}}
\end{subfigure}
\caption{The illustration of Algorithms
\ref{alg:mg-correction}, \ref{alg:mg-vcycle}, 
and \ref{alg:mg-fmg} in (\subref{fig:bmg-t}) , (\subref{fig:bmg-v}) , and (\subref{fig:bmg-f})  respectively. A gray diamond box stands 
for a direct eigensolve on the coarse level (with or without the enrichment), a blue circle stands for a source problem approximation (smoother/solve) on finer levels. The names in the boxes or 
circles stand for the finite element spaces that are used in the various steps
of the algorithm. (\subref{fig:bmg-t}) A two-grid bootstrap algorithm between the coarse and fine levels. (\subref{fig:bmg-v}) A BMG V-cycle iteration between level 1 and level 3. The enrichments space $X_{h_2}$ is updated to $X_{h_3}$ using the approximations in $V_{h_3}$. (\subref{fig:bmg-f}) A BFMG cycle iteration between level 1 and level 4. From coarse to fine, 
Rayleigh quotient iteration \eqref{eq:pb-m-rq} is performed. From fine to coarse, 
smoothing \eqref{eq:pb-m-s} is performed.}
\end{figure}

\begin{algorithm}[ht]
\caption{A BMG V-cycle for approximating eigenpairs between level 1 and 
level $k\geq 2$. \textit{Input:} $(u_{h_{k-1},i},\mu_{h_{k-1}},\lambda_{h_{k-1},i})$, 
\textit{Output:} $(u_{h_k,i},\mu_{h_{k}},\lambda_{h_k,i})$. }
\label{alg:mg-vcycle}
\begin{algorithmic}[1]
\State{\textbf{Coarse grid eigensolve in an enriched space}}
\item[] For a fixed shift $\mu_{h_{k-1}}$, perform a direct 
eigensolve for the coarse grid eigenpairs 
$(u_{h_{k-1},i},\lambda_{h_{k-1},i})\in V_{h_1,h_{k-1}}\times \RR^+$ 
for $i\in \Lambda$, where $V_{h_1,h_{k-1}} = V_{h_1}+X_{h_{k-1}}$, and $\Lambda$ 
is index set of $X_{h_{k-1}}$ on the $(k-1)$-th level, satisfying: for 
any $v\in V_{h_1,h_{k-1}} $,
\begin{equation}
a_{h_1,h_{k-1}} (u_{h_{k-1},i} ,v) 
- \mu_{h_{k-1}} b_{h_1,h_{k-1}}(u_{h_{k-1},i}, v) 
= \lambda_{h_{k-1},i} \, b_{h_1,h_{k-1}} (u_{h_{k-1},i}, v). 
\end{equation}

\State{\textbf{Rayleigh quotient iteration}}
\item[] For $s = 2,\dots, k-1$, approximate for ${u}^{h_s,i}$ in the 
following source problem on level $s$: for any  $v\in V_{h_s}$
\begin{equation}
\label{eq:pb-m-rq}
a_{h_s} ({u}^{h_s,i},v) - \mu_{h_{k-1}}b_{h_s}({u}^{h_s,i}, v) 
=  \lambda_{h_{k-1},i} \,b_{h_s,h_{k-1}} ({u}^{h_{s-1},i}, v),
\end{equation}
where ${u}^{h_{1},i}:= u_{h_{k-1},i} \in V_{h_1}+X_{h_{k-1}}$.

\State{\textbf{Smoothing of fine grid auxiliary source problem}}
\item[] Applying a smoother for ${u}^{h_k,i}\in V_{h_k}$ from level $k$ back to 
level 1 for the following problem defined on the finest grid: for any $v\in V_{h_k}$
\begin{equation}
\label{eq:pb-m-s}
a_{h_k} ({u}^{h_k,i} ,v) - \mu_{h_{k-1}} b_{h_k} ({u}^{h_k,i}, v)= 
\lambda_{h_{k-1},i} b_h (\cP_{h_k} {u}^{h_{k-1},i}, v).
\end{equation}
Then orthogonalized ${u}^{h_{k},i}$'s with respect to the inner product 
$b_{h_k}(\cdot,\cdot)$.

\State{\textbf{Coarse shift and enrichment space updates}} 
\item[]  Update $X_{h_k} = \operatorname{span} \{{u}^{h_k,i}\}_{i\in \Lambda}$. 
Update the coarse grid shift $\mu_{h_{k-1}}$ from the previous cycle to $\mu_{h_k}$ 
based on the Rayleigh quotients of the eigenfunctions in $X_{h_k}$.

\State{\textbf{Coarse grid eigensolve in the updated enriched space and shift}}
\item[] Find $(u_{h_k},\lambda_{h_k})\in V_{h_1,h_k}\times \RR^+$
satisfying, for $i\in \Lambda$, and any $v\in V_{h_1,h_k}$:
\begin{equation}
a_{h_1,h_{k}} (u_{h_{k},i} ,v) 
- \mu_{h_{k}} b_{h_1,h_{k}}(u_{h_{k},i}, v) 
= \lambda_{h_{k},i} \, b_{h_1,h_{k}} (u_{h_{k},i}, v). 
\end{equation}
where $V_{h_1,h_k} = V_{h_1} + X_{h_k}$ is the updated enriched coarse space.
\end{algorithmic}
\end{algorithm}

\subsection{The bootstrap multigrid cycle}
\label{sec:bmg}
The V-cycle variant of Algorithm~\ref{alg:mg-correction} is given by incorporating 
this two-grid method into a multilevel setting, as we outline in 
Algorithm~\ref{alg:mg-vcycle}, and illustrate briefly using 3 levels in 
Figure~\ref{fig:bmg-v}. The algorithm here is a simplified variant of the BAMG approach presented in~\cite{BAMG2010} since
the coarse-level systems and the restriction and interpolation operators
can be defined using the finite element formulation at hand; in BAMG the 
restriction and interpolation operators are defined algebraically and the coarse 
level system is computed using the Galerkin definition.  

For the geometric BMG algorithm in this paper, we construct an 
``almost'' hierarchical sequence of finite element spaces that are geometrically 
projected to the finest 
one, with a total number of $K$ meshes: 
$\wh{V}_{h_1} \subset \wh{V}_{h_2} \subset \dots \subset \wh{V}_{h_{K-1}}\subset 
V_{h_K}$. The meshes are obtained using the surface mesh refinement procedure in Section 
\ref{sec:prelim}, and then finite element spaces on the $1$ through $(K-1)$-th 
levels are projected onto the $K$-th level by recursively using the projection in 
\eqref{eq:proj}. We note that the coarsest space has been enriched by approximations 
from all finer spaces at the end of one BMG V-cycle. 

In contrast 
to the conventional fine-coarse-fine multigrid V-cycle for a source problem, the BMG 
V-cycle (Algorithm \ref{alg:mg-vcycle}) poses itself as an ``inverted'' V-cycle as 
the relaxations process as coarse-fine-coarse. In Figure 
\ref{fig:bmg-v}, the first $V_{h_2}$ node corresponds to the presmoothing stage of 
the conventional multigrid V-cycle. The Rayleigh quotient iteration problem
\eqref{eq:pb-m-rq} is approximated in $V_{h_2}$ using a smoother. Starting from the 
$V_{h_3}$ node, together with the second $V_{h_2}$ node, 1 smoothing for the 
source problem \eqref{eq:pb-m-s} is performed on each level, resembling the 
postsmoothing stage of the conventional multigrid V-cycle. The diamond boxes 
represent the direct eigensolve in the space $V_{h_1} + X_{h_i}$ ($i= 2,3$).

Finally, we present the bootstrap full multigrid (BFMG) method in 
Algorithm \ref{alg:mg-fmg}. The transition from the BMG 
V-cycle (Algorithm \ref{alg:mg-vcycle}) to the BFMG (Algorithm \ref{alg:mg-fmg}) 
resembles the conventional full geometric multigrid cycle that applies V-cycles in 
an incremental fashion, in terms of the levels (meshes) involved. On the coarsest 
mesh, the eigenvalue problem is directly solved
and then the eigenpairs of interest form the right hand sides of the 
source problems that are approximated on the finer meshes. The approximated 
solutions obtained from the source problems are then used to enrich the coarse 
space. Overall, the algorithm is continuously improving the eigenpair approximations 
by working mainly on the coarsest level. The algorithm is again illustrated briefly 
using 4 levels in Figure~\ref{fig:bmg-f}.

For the BFMG algorithm (Algorithm \ref{alg:mg-fmg}), we remark that the 
final output on the $K$-th level, which approximates a certain eigenpair of 
interest, is given 
by the source problem approximation $(u_{h_K},\lambda_{h_K}):= 
({u}^{h_K},{\lambda}^{h_K})$. 
The effect of a single BMG V-cycle for this approximation resembles the two-grid 
method in Algorithm \ref{alg:tg} in that a direct eigensolve is applied in the 
(enriched) coarse space and, then, the source approximation is computed on the finer 
spaces. 

Otherwise, if the aim is to  recover the entire spectrum, then the mesh can be 
continuously refined in which case 
the final output from the BFMG algorithm (Algorithm \ref{alg:mg-fmg}) converges
to the true eigenpairs of problem \eqref{eq:pb-eig-weak}.

\begin{algorithm}[ht]
\caption{BFMG scheme for approximating eigenpairs over 
$K$ levels.}
\label{alg:mg-fmg}
\begin{algorithmic}[1]
\State{\textbf{Coarse eigensolve}}
\item[] Let $(u_{h_1,i},\lambda_{h_1,i})\in V_{h_1}\times \RR^+$ 
($i\in \Lambda \subset \{1,\dots,\dim V_{h_1}\}$)
perform a direct eigensolve for the following problem
\[
a (u_{h_1,i} ,v) - \mu_{h_1} b (u_{h_1,i}, v) = \lambda_{h_1,i} b (u_{h_1,i}, v), 
\quad \text{ for any } v\in V_{h_1}.
\]

\State{\textbf{V-cycle iteration}}
\item[] Set a certain level $k>1$, perform the V-cycle iteration as in Algorithm 
\ref{alg:mg-vcycle} between level 1 and $k$.

\State{\textbf{Rayleigh quotient}} 
\item[] If $k<K$, $k \gets k+1$. If $k=K$, compute the eigenvalue approximation on 
the finest level for all $i\in \Lambda$ using the source approximations.
\end{algorithmic}
\end{algorithm}

\medskip
\section{Numerical Experiments}
\label{sec:n-ex}
In this section, we report various results from the finite element approximation of 
the eigenvalue problem \eqref{eq:pb-eig-weak} on a 2-sphere $\SS^2$.  
The numerical experiments in this section are carried out using the finite 
element toolbox $i$FEM (\cite{Chen.L2008c}). The source codes for these experiments are publicly available at \url{http://github.com/lyc102/ifem/tree/master/research}.
The initial coarse mesh is generated by 
\texttt{Distmesh} (\cite{Persson-distmesh}).

In all tests (of both the two-grid methods (TG) and the 
bootstrap full multigrid method (BFMG) from Algorithm \ref{alg:mg-fmg}), the finer 
meshes are obtained from a uniform refinement of the coarser mesh. The mesh sizes 
satisfy that $h_{k-1}^4 \lesssim h_k$ between the coarser mesh at $(k-1)$-th level 
and the finer mesh at $k$-th level. The newly created vertices are then projected on 
to the continuous surface.

In all the BMG approaches, the dimension of the enrichment space
on the coarsest level $\dim X_h$  
is fixed unless explicitly stated otherwise. 
This dimension is usually set as the 
multiplicity of the largest possible eigenvalue being computed plus some additional 
overlap with its neighboring eigenvalues in the discrete spectra. 
We note that, in the first two subsections, unless 
specifically stated otherwise the source problems on the finer levels are solved 
using a direct method.

The true solutions to the eigenvalue problem \eqref{eq:pb-eig-strong} of the
Laplacian-Beltrami operator on the 2-sphere are known as the real spherical 
harmonics (e.g. see \cite{hobson1955}).
Specifically, the $j$-th eigenvalues, for $l^2\leq j\leq (l+1)^2-1$, counting 
multiplicity, are $\lambda_j = l(l+1)$ for $l\in \ZZ^+$. The dimension of the 
associated eigenspace to the $l$-th distinctive eigenvalue is $2l+1$.

In computation, due to the numbering of the vertices 
in the triangulation, the eigenfunctions obtained approximate a rotated version of 
the spherical harmonics represented using Cartesian coordinates. For this reason, we 
use an a posteriori error estimator to give the error 
estimate of the eigenfunctions under $H^1(\G)$-seminorm. The error 
estimator we use is a combination of the one in \cite{Demlow2007adaptive} for a 
non-eigenvalue problem, and the one from \cite{Duran2003posteriori} for the 
eigenvalue problem on a polygonal domain. The local error estimator for a surface 
triangle $T\in \cT_h$ is defined as follows:
\begin{equation}
\label{eq:est-res}
\begin{aligned}
\eta_T^2 & = h_T^2\lambda_h^2\norm{u_h}_{L^2(T)}^2 
+\frac{1}{2} \sum_{e\subset{\p T}} h_e 
\norm{\jump{\nabla_{\G} u_h \cdot (\vn_T\cross\btau_e)}{e}}_{L^2(e)}^2 
+ \norm{\vB_h \nabla_{\G} u_h}_{L^2(T)}^2.
\end{aligned}
\end{equation}
For notations please refer to \cite{Demlow2007adaptive,Bonito2018priori,BonitoDemlowNochetto2020finite}. The 
global error estimator is obtained by $\eta(u_h) := \left(\sum_{T\in \cT_h} 
\eta_T^2  \right)^{1/2} $.

\subsection{Standard two-grid eigensolver}
\label{sec:n-tg}
To begin we apply the two-grid method from Algorithm~\ref{alg:tg} with shift 
$\mu = 0$ and note that the indefinite source problem on the finer grid is 
solved directly.  Our aim here is 
to illustrate the spectra loss phenomenon that is caused by a coarse space being 
too small. To this end, the coarsest mesh that approximates the sphere is set to 
have only $54$ nodes, i.e., $\#(\texttt{DoF}_H) = 54$. 
The coarsest spectrum computed by \texttt{eigs} can be 
viewed in Figure \ref{fig:cp-eigs}. We can observe that the true $\lambda_j = 30$ 
for $26\leq j \leq 36$, and the dimension of the eigenspace for $\lambda_j = 30$ is 
$11$.   Further, we see that the numerical approximations to the higher end of the 
spectrum have large errors, that is, we see that the approximated $\lambda_{H,j}$ on 
the coarse mesh are closer to the next eigenvalue $42$ in the spectra than the true 
eigenvalue $30$ that they are supposed to approximate. 
This causes part of the eigenpairs from the coarse eigenspace $M(\lambda_{H,j})$ for 
$26\leq j \leq 36$ to converge to eigenpairs of the true $\lambda_{j'} = 42$ 
for $37\leq j' \leq 49$.  As a result, as seen in Figure \ref{fig:cp-eigshtg} 
$\dim(M(\lambda_{h,j})) = 8 < \dim(M(\lambda_{j})) = 11$, where $M(\lambda_{h,j})$ 
is the finite element approximated eigenspace associated with 
$\lambda_{h,j} \approx 30$. 

Note that the spectrum loss becomes even more severe for the approximation 
of the eigenspace for the true eigenvalue $\lambda_j = 42$  ($37\leq j \leq 49$). 
In this case, if we use the same coarse mesh only four eigenfunctions are recovered 
in the fine space (see Figure \ref{fig:cp-eigshtg}).

\begin{figure}[htbp]
  \centering
\begin{subfigure}[b]{0.38\linewidth}
    \centering
    \includegraphics[width=0.9\textwidth]{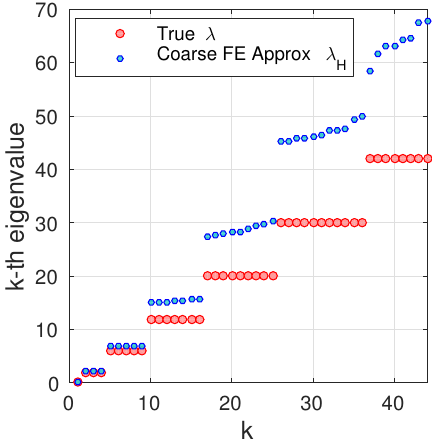}
    \caption{\label{fig:cp-eigs}}
\end{subfigure}%
\hspace{0.3in}
\begin{subfigure}[b]{0.38\linewidth}
      \centering
      \includegraphics[width=0.9\textwidth]{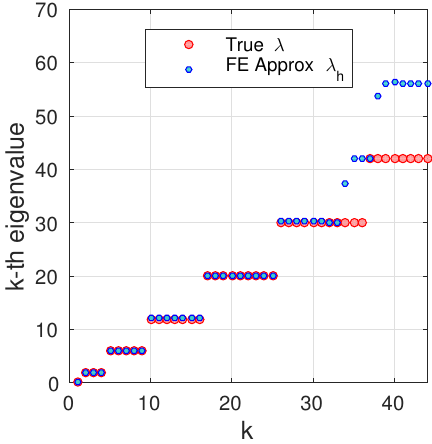}
      \caption{\label{fig:cp-eigshtg}}
\end{subfigure}
\vspace{-0.2in}
\caption{Coarse $\#(\texttt{DoF}_H) = 54$. TG method results in the approximated 
eigenspace dimension being less than intended. Blue dots are the Rayleigh 
quotients computed by the two-grid finite element approximations. Red dots are true 
eigenvalues. (\subref{fig:cp-eigs}) Coarse grid approximation $\lambda_H$'s versus the true $\lambda$. (\subref{fig:cp-eigshtg}) Two-grid approximation $\lambda_h$'s after 5 levels versus the true 
$\lambda$.}
\end{figure}

When a certain eigenpair is ``lost'' in the two-grid approximation scheme, a 
finer coarse mesh can be used in order to recover an improved approximation. This is 
because on a finer coarse mesh the wavelength of these eigenfunctions can be 
resolved. As an example consider the case where the coarse mesh $\#(\texttt{DoF}_H) 
= 54$ and the $34$-th eigenvalue ($\lambda_{34} = 30$) is wrongly approximated by 
the two-grid method (see Figure \ref{fig:cp-eigshtg}). 
If a finer coarse mesh is used instead with $\#(\texttt{DoF}_H) = 96$, then the 
$34$-th eigenvalue is recovered. A comparison of this case is provided in Figure 
\ref{fig:eig-34}.

\begin{figure}[htbp]
  \centering
\begin{subfigure}[b]{0.22\linewidth}
    \centering
    \includegraphics[width=0.9\textwidth]{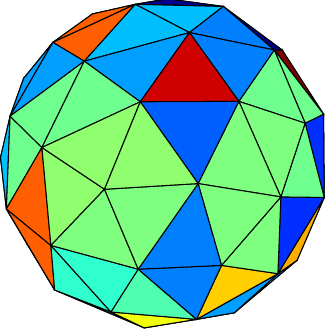}
    \caption{\label{fig:eig-34-1}}
\end{subfigure}%
\hspace{0.1in}
\begin{subfigure}[b]{0.22\linewidth}
      \centering
      \includegraphics[width=0.9\textwidth]{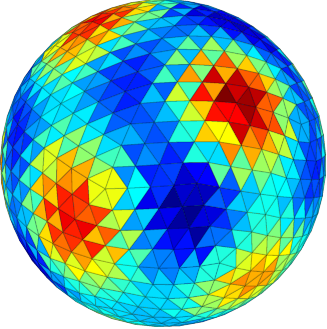}
      \caption{\label{fig:eig-34-2}}
\end{subfigure}
\hspace{0.1in}
\begin{subfigure}[b]{0.22\linewidth}
    \centering
    \includegraphics[width=0.9\textwidth]{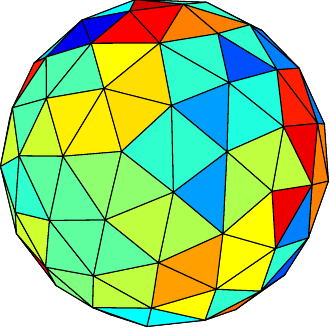}
    \caption{\label{fig:eig-34-3}}
\end{subfigure}%
\hspace{0.1in}
\begin{subfigure}[b]{0.22\linewidth}
    \centering
    \includegraphics[width=0.9\textwidth]{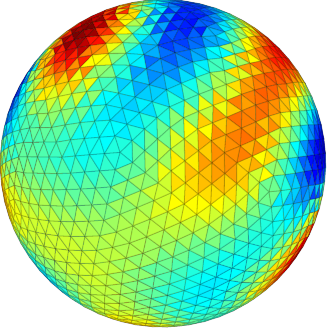}
    \caption{\label{fig:eig-34-4}}
\end{subfigure}
\caption{Two-grid method, comparison of finite element approximations of one 
eigenfunction associated with eigenvalue $\lambda_{34}  = 30$ starting from 
different coarse meshes. (\subref{fig:eig-34-1}) $\#\texttt{DoF}=54$, $u_H$ with $\lambda_{H,34}
\approx 47.5871$. (\subref{fig:eig-34-2}) After 3 levels of refinement, 
$\lambda_{h,34} \approx 397353$, $u_h$ becomes an approximation (both visually and in terms of the Rayleigh quotient) to an eigenfunction associated with $\lambda_{j} = 42$ 
($37\leq j \leq 49$). (\subref{fig:eig-34-3}) $\#\texttt{DoF}=96$, $u_H$ with $\lambda_{H,34} \approx 40.8393$. (\subref{fig:eig-34-4}) After 3 levels of refinement, $\lambda_{h,34} \approx 30.6344$, correct approximation $u_h$ to an eigenfunction associated with 
$\lambda_{j} = 30$ ($26\leq j \leq 36$).}
\label{fig:eig-34}
\end{figure}

\subsection{Bootstrap multigrid enrichment}
\label{sec:n-bm}
An alternative to simply increasing the coarse space by refining the mesh
is given by the bootstrap idea to enrich the coarse space with eigenfunctions
obtained by approximating source problems on finer meshes as in Algorithm 
\ref{alg:mg-fmg}.

If the dimension of the coarsest enriched space is fixed such that
$\dim \mathcal{V}_H + 1$ in all the V-cycles (as in the method from~\cite{Lin2015multi}), then 
approximating larger eigenvalues with multiplicity greater than one still requires a 
finer coarse space. 

To see this we consider the case where the user wants to recover the eigenfunctions
that correspond to $\lambda_j = 42$, where $37\leq j \leq 49$. 
In this example, the coarsest space $\mathcal{V}_H = 
V_{h_1}$ is chosen as in the previous example where $\#(\texttt{DoF}_H) = 
54$. We set the coarse space being enriched as
$V_{h_1,h_2} := V_{h_1}+\{u^{37,h_2} \}$, and solve the coarse  
eigenvalue problem using a direct eigensolve \texttt{eigs} in MATLAB.  The results in 
Figure \ref{fig:cp-eigh1h2} show that the 
new approximation $\lambda_{h_2,37}$ does approximate the $37$-th eigenvalue to some 
extent. However, when we perform this procedure again and then solve the coarse 
eigenvalue problem in the 
updated enriched space $V_{h_1,h_3} := V_{h_1}+\{u^{37,h_3} \}$, we observe
that the approximation to the eigenpair of interest does not improve (see Figure 
\ref{fig:cp-eigh2h3}). 

The reason for this behavior can be 
explained as follows.  First, we see that the approximations to $\lambda_{h_1,j}$ 
($26\leq j\leq 36$) are closer to the true 
eigenvalue $\lambda_j = 42$  ($37\leq j \leq 49$) than the corresponding discrete 
spectra $\lambda_{h_1,j}$ ($37\leq j\leq 49$). As the coarse space is enriched by a 
single function $\{u^{h_2,37}\}$, which is obtained by solving a single source 
problem in $V_{h_k}$ ($k\geq 2$) on the $k$-th level, 
the new approximation $\lambda_{h_k,37}$ becomes closer to the true 
eigenvalue $\lambda_{37} = 42$ that it is supposed to approximate. However, it is 
still not as good an approximation as $\lambda_{h_1,j}$ ($26\leq j\leq 36$) to the 
true eigenvalue $\lambda_{37} = 42$. As such, the algorithm mixes these modes and 
then can not detect the eigenpair of interest.

\begin{figure}[htbp]
  \centering
\begin{subfigure}[b]{0.38\linewidth}
    \centering
    \includegraphics[width=0.9\textwidth]{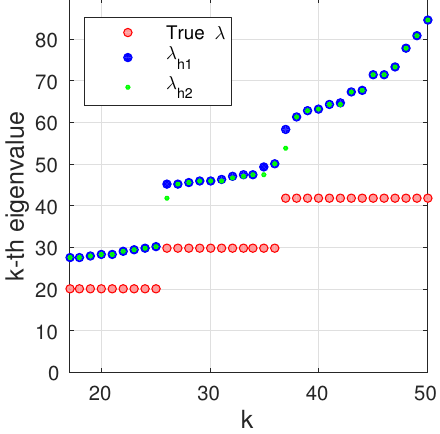}
    \caption{\label{fig:cp-eigh1h2}}
\end{subfigure}%
\hspace{0.3in}
\begin{subfigure}[b]{0.38\linewidth}
      \centering
      \includegraphics[width=0.9\textwidth]{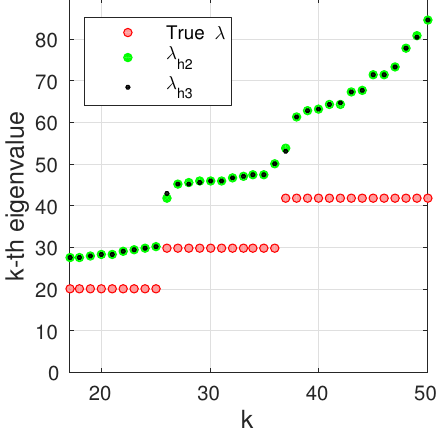}
      \caption{\label{fig:cp-eigh2h3}}
\end{subfigure}
\caption{Coarse $\#(\texttt{DoF}_H) = 54$, single enrichment multigrid method 
results in the eigenpair of interest not being represented in the discrete spectrum. 
Red dots are true eigenvalues. (\subref{fig:cp-eigh1h2}) Coarse eigensolve approximations $\lambda_{h_1}$, and $\lambda_{h_2}$ obtained from solving a coarse eigenvalue problem in an enriched space 
$V_{h_1}+\{\tilde{u}_{h_2} \}$, versus the true $\lambda$. (\subref{fig:cp-eigh2h3}) $\lambda_{h_2}$ obtained from $V_{h_1}+\{\tilde{u}_{h_2}\}$, and 
$\lambda_{h_3}$ obtained from $V_{h_1}+\{\tilde{u}_{h_3}\}$, versus the true 
$\lambda$.}
\end{figure}

Heuristically speaking, for the Laplace-Beltrami eigenvalue problem on $\SS^2$, 
assuming the a priori knowledge of the dimension of the eigenspace, it follows that 
if we seek to approximate the $l$-th distinctive eigenvalue $\lambda_j = l(l+1)$, 
then the coarse space $V_{h_1}$ should at least be enriched by the subspace
$X_{h_k} = \operatorname{span}\{u^{h_k,j'}\}_{j'\in \Lambda }$, where 
$\Lambda = \{ j' \in \ZZ: (l-1)^2+1\leq j' \leq j\}$. 
In this example, for $\lambda_{37} = 42$ where $l=7$, the enrichment space $X_{h_k}$ 
is $\operatorname{span}\{u^{h_k,j'}\}_{26\leq j'\leq 37}$, 
where the $u^{h_k,j'}$'s denote the solutions to the source problems in the 
finer space $V_{h_k}$ on the $k$-th level ($k\geq 2$) for the 26-th to 37-th 
eigenpairs of the discrete spectra. The comparison can be found in Figure 
\ref{fig:cp-eigh1h2bt}, \ref{fig:cp-eigh2h3bt}. Now, because the discrete 
eigenvalues $\lambda_{h_k,j}$ ($26\leq j\leq 36$) are better approximated after the 
coarse space is enriched with multiple eigenfunctions, the algorithm is able to 
better detect the eigenpair of interest $\lambda_{h_k,j}$ ($j=37$, $k=2,3$) after 
the second coarse eigensolve, and overall we see that the approximations improve.

\begin{figure}[htbp]
  \centering
\begin{subfigure}[b]{0.38\linewidth}
    \centering
    \includegraphics[width=0.9\textwidth]{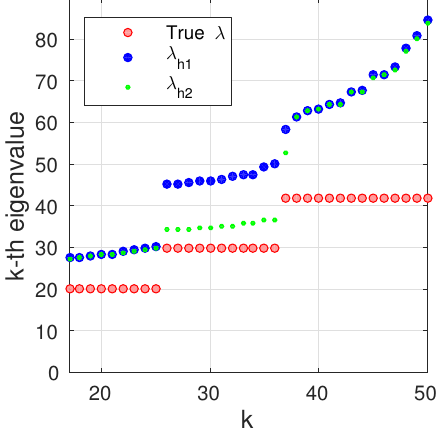}
    \caption{\label{fig:cp-eigh1h2bt}}
\end{subfigure}%
\hspace{0.3in}
\begin{subfigure}[b]{0.38\linewidth}
      \centering
      \includegraphics[width=0.9\textwidth]{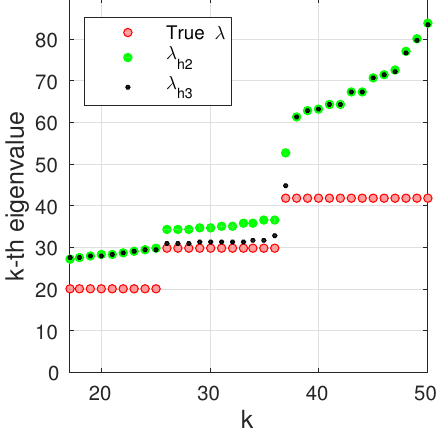}
      \caption{\label{fig:cp-eigh2h3bt}}
\end{subfigure}
\caption{Coarse $\#(\texttt{DoF}_H) = 54$, the enrichment space is chosen according 
to the placement of the eigenpair of interest on the discrete spectra. Red 
dots are true eigenvalues. (\subref{fig:cp-eigh1h2bt}) Coarse grid eigensolve approximations $\lambda_{h_1}$'s, and $\lambda_{h_2}$'s obtained from the eigensolve in the enriched space 
$V_{h_1}+ \operatorname{span}\{u^{h_2,j'}\}_{26\leq j'\leq 37}$, 
versus the true $\lambda$. (\subref{fig:cp-eigh2h3bt}) Enriched coarse eigensolve approximations
$\lambda_{h_2}$'s obtained from 
$V_{h_1}+ \operatorname{span}\{u^{h_2,j'}\}_{26\leq j'\leq 37}$, and 
$\lambda_{h_3}$'s obtained from 
$V_{h_1}+ \operatorname{span}\{u^{h_3,j'}\}_{26\leq j'\leq 37}$, versus the 
true $\lambda$.}
\end{figure}

If one is interested in recovering all the eigenvalues from the smallest one, then  
Algorithm \ref{alg:mg-vcycle} can be applied with an index-fixed $X_{h_k}$, and 
for each distinctive eigenvalue one at a time. Here we choose a fixed dimension 20 
(greater than the biggest numerical multiplicity observed from the coarse 
eigensolve). The 20 enrichment candidate functions are the 
eigenfunctions associated with the 20 eigenvalues nearest to the eigenvalue of 
interest on the discrete spectra, and these eigenfunctions are kept through the BFMG 
cycle in $X_{h_k}$ (level $k=2,3,4$). The discrete spectra recovered using this 
setting of BFMG (Algorithm \ref{alg:mg-fmg}) can be found in Figure 
\ref{fig:cp-eigshmg} and Figure \ref{fig:eigshmg3}.

\begin{figure}[htbp]
  \centering
\begin{subfigure}[b]{0.38\linewidth}
    \centering
    \includegraphics[width=0.9\textwidth]{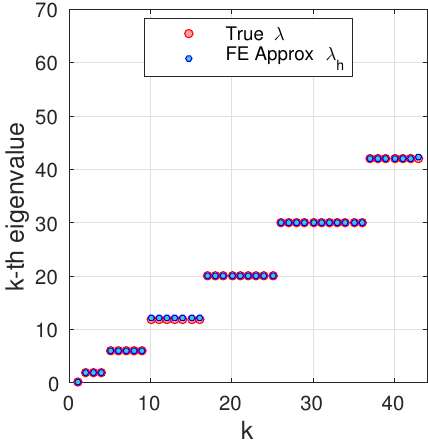}
    \caption{\label{fig:cp-eigshmg}}
\end{subfigure}%
\hspace{0.3in}
\begin{subfigure}[b]{0.38\linewidth}
      \centering
      \includegraphics[width=0.9\textwidth]{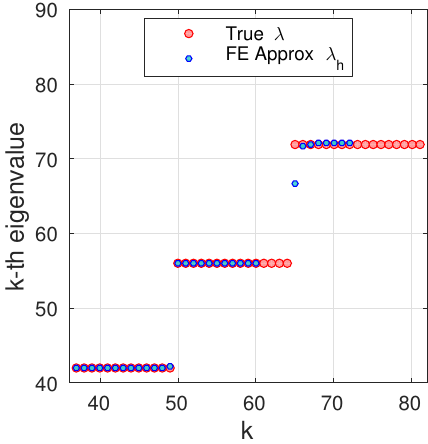}
      \caption{\label{fig:cp-eigshmg17}}
\end{subfigure}
\caption{Coarse $\#(\texttt{DoF}_H) = 54$. BMG with an index-fixed
enrichment space of dimension 20 converges as intended (left). BMG with an 
index-updating enrichment space is able to detect higher eigenvalues that can not be 
resolved in the coarse grid (right). Blue dots are the Rayleigh quotient computed by 
the multigrid finite element approximations. Red dots are true eigenvalues. (\subref{fig:cp-eigshmg})
Multigrid approximations $\lambda_h$'s after 4 levels versus the true 
$\lambda$. (\subref{fig:cp-eigshmg17}) Multigrid approximations $\lambda_h$'s after 6 levels versus the true $\lambda$.}
\end{figure}

\begin{figure}[htbp]
  \centering
\begin{subfigure}[b]{0.38\linewidth}
    \centering
    \includegraphics[width=0.8\textwidth]{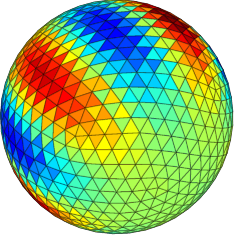}
    \caption{\label{fig:eigshmg-34}}
\end{subfigure}%
\hspace{0.3in}
\begin{subfigure}[b]{0.38\linewidth}
      \centering
      \includegraphics[width=0.8\textwidth]{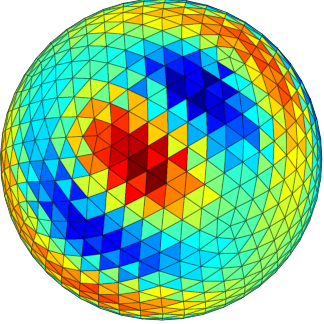}
      \caption{\label{fig:eigshmg-42}}
\end{subfigure}
\caption{The BMG results of the finite element approximations of the 
eigenfunction associated with eigenvalues $\lambda_{34} = 30$ 
and $\lambda_{37} = 42$ after 4 levels of refinement starting with 
$\#\texttt{DoF}=54$ (Figure shows solutions on the 
3rd level). For TG, these approximations can only be recovered correctly with coarse
$\#\texttt{DoF}=96$. (\subref{fig:eigshmg-34})
$\#\texttt{DoF}=3330$, $\lambda_{h_4,34} 
\approx 30.3061$, correct approximation $u_h$ to an eigenfunction associated with 
$\lambda_{j} = 30$ ($26\leq j \leq 36$). (\subref{fig:eigshmg-42}) $\#\texttt{DoF}=3330$, $\lambda_{h_4,42} 
\approx 42.4909$, correct approximation $u_h$ to an eigenfunction associated with 
$\lambda_{j} = 42$ ($37\leq j \leq 49$).}
\label{fig:eigshmg3}
\end{figure}

The other way of improving the eigenvalue approximation for a certain range is to 
set a shift $\mu >0 $ in Algorithm \ref{alg:mg-fmg} to achieve a ``zoom 
in'' effect. In Figure \ref{fig:cp-eigsHs}, a coarse grid shift $\mu_{h_1} = 32$ is 
applied in the coarse solve firstly in Algorithm \ref{alg:mg-correction} involving 
two levels. The 
enrichment candidate space $X_{h_1}$ is chosen to be a fixed dimension of 20, using 
the 20 eigenfunctions associated with the 20 eigenvalues closest to 0. After the 
source solves in $V_{h_2}$, when performing the eigensolve in an enriched coarse 
space $V_{h_1,h_2} = V_{h_1} + X_{h_2}$, the new coarse 
grid shift $\mu_{h_2}$ is 
set to be the previous shift $\mu_{h_1}$ plus the average of the Rayleigh quotients 
of the functions in $X_{h_2}$. $X_{h_2}$ contains all the 
source solutions from problem \eqref{eq:pb-f-aux} using functions from $X_{h_1}$ as 
the sources. The new shift $\mu_{h_2}$ is then used in the  
next BMG V-cycle (Algorithm \ref{alg:mg-vcycle}) involving three levels. This 
averaging for the new shift is 
ad-hoc. The reason for this choice is that we observe in practice that performing 
the BFMG (Algorithm \ref{alg:mg-fmg}), when applied 
continuously among multiple levels, brings the whole discrete spectra closer to the 
true spectra. Consequently, the averaging after each V-cycle 
is to compensate for this change. Figure 
\ref{fig:cp-eigshsmg} contains results obtained with applying this procedure twice, between 
level 3 to level 1 and between level 4 to level 1. The convergence after 5 levels of refinement can be found in 
Figure \ref{fig:conv-s}. We note that if the shift is unchanged after each 
V-cycle, then a similar phenomenon as in Figure \ref{fig:cp-eigh2h3} is observed. 

\begin{figure}[htbp]
  \centering
\begin{subfigure}[b]{0.38\linewidth}
    \centering
    \includegraphics[width=0.9\textwidth]{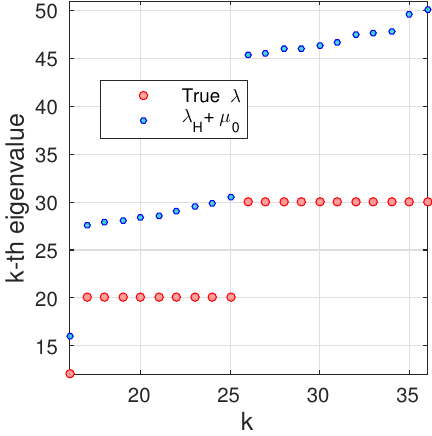}
    \caption{\label{fig:cp-eigsHs}}
\end{subfigure}%
\hspace{0.3in}
\begin{subfigure}[b]{0.38\linewidth}
      \centering
      \includegraphics[width=0.9\textwidth]{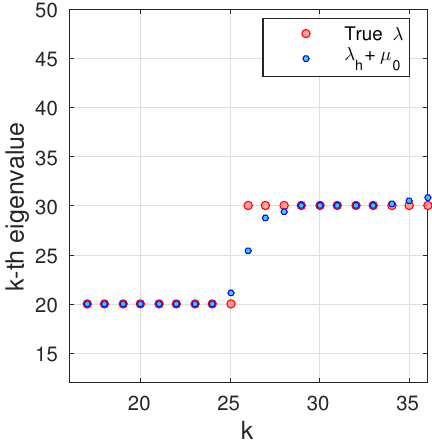}
      \caption{\label{fig:cp-eigshsmg}}
\end{subfigure}
\caption{Coarse $\#(\texttt{DoF}_H) = 54$, both the enrichment space and the shift 
$\mu$ are chosen on each level based on the current approximation to the eigenpair 
of interest on the discrete spectra. Red dots are true eigenvalues.(\subref{fig:cp-eigsHs})
Coarse grid approximation $\lambda_{h_1}$'s, the shift $\mu_{h_1} = 32$. (\subref{fig:cp-eigshsmg}) $\lambda_{h_4}$'s obtained from $V_{h_1}+ X_{h_4}$, the shift 
$\mu_{h_4} = 24.9861$.}
\end{figure}

\begin{figure}[htbp]
  \centering
\begin{subfigure}[b]{0.38\linewidth}
    \centering
    \includegraphics[width=0.9\textwidth]{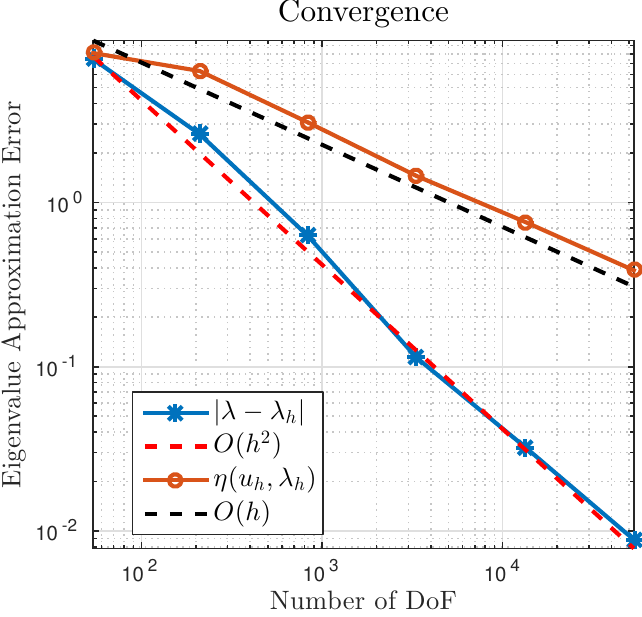}
    \caption{\label{fig:conv-s}}
\end{subfigure}%
\hspace{0.3in}
\begin{subfigure}[b]{0.38\linewidth}
      \centering
      \includegraphics[width=0.9\textwidth]{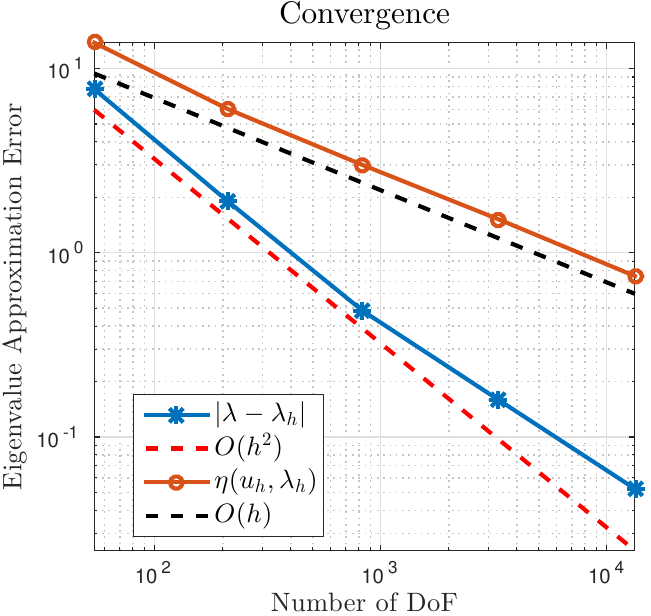}
      \caption{\label{fig:conv-r}}
\end{subfigure}
\caption{Convergence results of BMG using the shifted problem with an updating $\mu$ 
to approximate interior eigenvalue $\lambda= 20$.
The enrichment space's dimension 
is $20$ where the eigenfunctions of the $20$ eigenvalues nearest to 0 are used. The 
coarse eigensolve is performed with a shift updated after each BMG V-cycle 
\ref{alg:mg-vcycle}. The convergence is plotted for the smallest eigenvalue among 
the ones which approximate $\lambda= 20$.(\subref{fig:conv-s}) The source problem on the current level is solved by a direct solver.(\subref{fig:conv-r}) The source problem on the current level is approximated by 5 sweeps of Kaczmarz relaxations.}
\end{figure}

Another interesting observation is that by computing the enrichment space according 
to the eigenpairs that we are attempting to approximate we are able to obtain 
improved approximations to these interior eigenvalues without having to
first compute accurate approximations to the smaller eigenvalues and their
corresponding eigenfunctions (see Figure \ref{fig:cp-eigh2h3bt}).  Thus, the BFMG 
(Algorithm \ref{alg:mg-fmg}) approach can be used to compute interior eigenpairs.  
In contrast, the approach proposed in~\cite{Chan15} only computes multiple 
eigenpairs starting with the smallest.  
In this way, our method for the shifted Laplace-Beltrami eigenvalue problem is more 
general and flexible than previous methods that have been developed 
for the Laplace eigenvalue problem on the plane.
 
\subsection{Solving versus relaxing the source problem}
\label{sec:n-rs}
In previous works on designing multigrid eigensolvers (e.g. 
\cite{Lin2015multi,racheva2002superconvergence}), the auxiliary 
problems in the correction step are solved with a direct method on the fine level. 
In the BMG algorithms (Algorithms \ref{alg:mg-correction}, \ref{alg:mg-vcycle}, 
\ref{alg:mg-fmg}), the exact solve is replaced with an iterative solver (a 
smoother such as symmetric Gauss-Seidel, or Kaczmarz relaxations). We illustrate in 
the numerical experiments that a direct solve of the shifted and indefinite system 
is not necessary, and that relaxation on the shifted source problems
suffices to guarantee the optimal convergence rate for the approximation of 
the eigenpair the algorithm produces. In the first test of this subsection, we 
compare the following 4 
ways to deal with the source problem on the fine level:

\begin{itemize}[topsep=2pt, leftmargin=2.5em]
\item[(1)] TG Algorithm \ref{alg:tg} with shift $\mu=0$ applied in a cascading 
fashion as in Remark \ref{rmk:tg-cascade}. The shift for finer levels comes from 
previous level, direct solve is applied to the shifted (indefinite) problem 
\eqref{eq:pb-f-s} 
on the finer levels when the algorithm is applied between neighbor levels.

\item[(2)] Same setting with (1). For the shifted (indefinite) problem 
\eqref{eq:pb-f-s}, 5 sweeps of Kacmarcz smoother per level is applied using the 
prolongation of the approximation from previous level as initial guess. 

\item[(3)] BFMG Algorithm \ref{alg:mg-fmg} with shift $\mu = 0$. For the unshifted 
positive definite source problems \eqref{eq:pb-f-aux} of BMG on the $k$-th level, 
direct solve is used.

\item[(4)] Same setting with (3). In the BMG V-cycle Algorithm \ref{alg:mg-vcycle}, 
1 sweep of Gauss-Seidel smoother per level is applied to problem \eqref{eq:pb-f-aux} 
using the prolongation of the approximations from previous levels as initial guesses. 
\end{itemize}

Note that the simplification in (2) and (4) reduces the overall computational cost 
of the approach significantly.

To judge whether the source problems need to be 
approximated to the machine precision as in a direct solve in (1) or (3), or some 
sweeps of a smoother suffice in (2) or (4), a key measure is to check the rates 
of convergence $r$, when the meshes are continuously refined. The rate of 
convergence $r$ (notice this is 
different from the rate of convergence for a linear system solver) satisfies the 
following: 
\begin{equation}
|\lambda - \lambda_{h_k}| \sim O(\#(\texttt{DoF}_k)^{-r}) \sim O(h_k^{2r}),
\end{equation}
where $k$ stands for the numbering of the levels, $h_k$ is the mesh size of the 
$k$-th level. $\lambda_{h_k}$ is the approximation of the eigenvalues 
obtained either from a direct eigensolve on level $k$, Algorithm \ref{alg:tg} 
applied cascadingly from level 1 through level $k$, or Algorithm \ref{alg:mg-fmg}. 
Note that, when the mesh is continuously refined such that $H = h_1 > h_2 > \cdots 
>h_k>\cdots$, the optimal rate of convergence of 
$O(h_k^2)$ by a direct solve predicted by the a priori estimate \eqref{eq:est-ev} 
for the eigenvalue approximation is 
achieved with $r\approx 1$. Checking if $r$ for an iterative algorithm is close 
to $1$ in turn implies that the algorithm is convergent in terms 
approximating the true eigenvalues.

In Table \ref{tb:s-r}, we compare the rate of convergence for the approximation of 
the first 3 distinct eigenvalues for the methods (1) through (4) mentioned earlier, 
as the first 3 distinct eigenvalues can be recovered by 
all methods without the loss of spectrum phenomenon in Section \ref{sec:n-tg}.

Firstly, $r$'s for (1) and (2) are compared in the 2nd and 3rd columns in Table 
\ref{tb:s-r}. Then $r$'s for (3) and (4) are compared in the 4th and 5th columns in 
Table \ref{tb:s-r}. 

The experiments correspond to 3 choices of the eigenvalues and the results 
suggest that it is sufficient to solve the unshifted source problem using 
Gauss-Seidel and the BFMG method (Algorithm \ref{alg:mg-fmg}) still yields a nearly 
optimal rate of convergence with 
$r \approx 1$.  In addition, the promising two-grid results obtained using the 
Kaczmarz iteration for the indefinite system together with the multilevel results 
reported in the previous section for the shifted (indefinite) Laplace-Beltrami 
eigenvalue problem suggest that the algorithm will also work well for symmetric 
indefinite problems such as the Helmholtz equation.

\begin{table}[h]
\begin{center}
\begin{tabular}{|c|c|c|c|c|}
\hline $\lambda$ & TG & TG w/ Kaczmarz  & BMG & BMG w/ GS\\ 
\hline   $2$ & $1.0084$ & $0.9656$ & $1.0037$ & $0.9963$\\
\hline   $6$ & $1.0063$ & $0.9575$ & $1.0005$ & $0.9764$\\
\hline   $12$ & $1.0084$ & $0.9731$ & $1.0059$ & $0.9801$ \\
\hline
\end{tabular} 
\vspace{0.1in}
\caption{Comparison of the rates of convergence $r$ as in $O(\#(\texttt{DoF})^{-r})$ 
for the first 3 distinct eigenvalues' approximation of Laplace-Beltrami operator on 
$\SS^2$. }
\label{tb:s-r}
\end{center}
\end{table}

In the second test of this subsection, we compare the following two methods studied 
in the last part of Section \ref{sec:n-bm}:
\begin{itemize}

\item[(5)] BFMG Algorithm \ref{alg:mg-fmg} with an adaptive shift $\mu$ after each 
BMG V-cycle. For the shifted indefinite source problems \eqref{eq:pb-m-s} of on the 
current finest level, direct solve is used.

\item[(6)] Same setting with (5). In the BMG V-cycle Algorithm \ref{alg:mg-vcycle}, 
5 sweeps of Kaczmarz smoother per level is applied to problem 
\eqref{eq:pb-m-s} using the prolongation of the approximation from previous level 
 as initial guess. 
\end{itemize}

The $r$'s for (5) and (6) are compared in the 2nd and 3rd columns in Table 
\ref{tb:s-r-20}. The comparison of the convergence is shown in Figure 
\ref{fig:conv-s} and Figure \ref{fig:conv-r}.

\begin{table}[h]
\begin{center}
\begin{tabular}{|c|c|c|}
\hline $\lambda$ & Shifted BMG & Shifted BMG w/ Kaczmarz \\ 
\hline   $20$ & $0.9861$ & $0.9054$ \\
\hline
\end{tabular} 
\vspace{0.1in}
\caption{Comparison of the rates of convergence $r$ in $O(\#(\texttt{DoF})^{-r})$ 
for $\lambda= 20$. }
\label{tb:s-r-20}
\end{center}
\end{table}

The convergence rates of the BFMG (Algorithm \ref{alg:mg-fmg}) with uniform 
refinement are further verified numerically for the Laplace-Beltrami eigenvalue problem 
on the sphere in Figure~\ref{fig:conv}. These results are on par with the a priori 
TG estimates in \eqref{eq:est-tgs}. Note that for larger eigenvalues,  $e.g., 
\lambda_j=30$ (plot on the right) more degrees of freedom are needed in order to 
obtain the same 
accuracy as for smaller eigenvalues, $e.g., \lambda = 2$ (plot on the left).  Of 
course, the resolution required to achieve high accuracy for large eigenvalues is 
expected to be greater since these modes are generally more oscillatory.  
The key difference in the BMG algorithms (Algorithm 
\ref{alg:mg-correction}, 
\ref{alg:mg-vcycle}, and \ref{alg:mg-fmg}) is that with the enriched coarse space
it is possible to approximate larger eigenvalues without needing to continuously 
increase the size of the coarsest system in order to approximate larger eigenpairs, 
as required by the standard two-grid method (Algorithm \ref{alg:tg}).

\begin{figure}[htbp]
  \centering
\begin{subfigure}[b]{0.38\linewidth}
    \centering
    \includegraphics[width=0.9\textwidth]{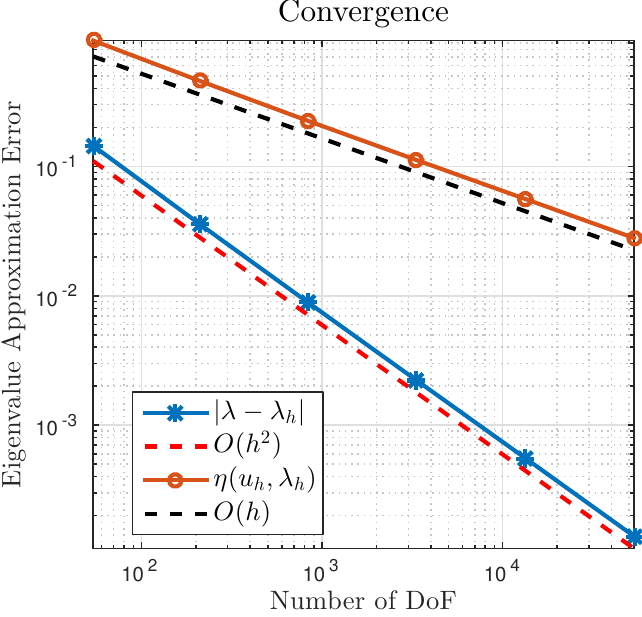}
    \caption{\label{fig:conv-1}}
\end{subfigure}%
\hspace{0.3in}
\begin{subfigure}[b]{0.38\linewidth}
      \centering
      \includegraphics[width=0.9\textwidth]{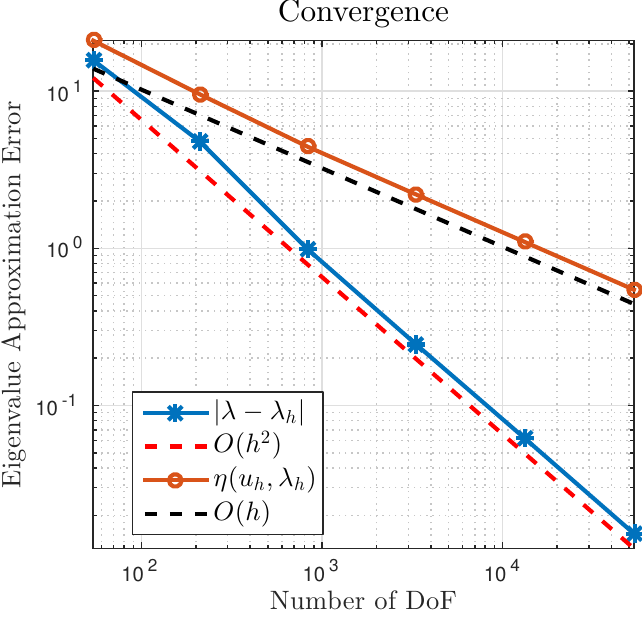}
      \caption{\label{fig:conv-2}}
\end{subfigure}
\caption{Typical convergence results of BMG: using unshifted problem to approximate $\lambda = 2$, 
$\lambda = 30$. (\subref{fig:conv-1}) $\lambda=2$. The enrichment space's dimension 
is fixed to be $6$ using the $6$ eigenfunctions of the lowest 6 eigenvalues from the coarse eigensolve. (\subref{fig:conv-2}) $\lambda=30$. The enrichment space's dimension 
is fixed to be $15$ using the $15$ eigenfunctions of the $15$ eigenvalues starting 
from the $20$-th eigenvalue from 0.}
\label{fig:conv}
\end{figure}

\section{Conclusions}  In this paper, we introduced a bootstrap multigrid method
for solving PDE eigenvalue problems.  As we showed, the BMG algorithm is a geometric version of
the BAMG setup algorithm from~\cite{BAMG2010} and it provides an overall framework
for the design of multigrid eigensolvers.  As an example, we designed and analyzed a finite element 
bootstrap multigrid method for solving the shifted Laplace-Beltrami eigenvalue problem.  
Our analysis and numerical experiments focused on this model problem since it is a challenging 
eigenvalue problem (the eigenvalues have high multiplicity and discretization and refinement of the problem leads 
to a non-nested sequence of meshes in the multilevel hierarchy) and the true solutions are known, allowing us to 
systematically study the performance of the algorithm.  

First, we extended the standard two-grid method from~\cite{HuCheng11ieig} for the Laplace 
eigenvalue problem on the plane to the shifted Laplace-Beltrami eigenvalue problem on a 
surface and we showed that the method gives an optimal $O(h^2)$ convergence, assuming 
that the coarsening is not too aggressive and that the coarse eigenvalue problem is 
solved using a direct method.  We also showed that if the coarse mesh is not 
fine enough then a ``spectra loss'' phenomena occurs and that approximating
larger eigenvalues requires increasingly finer coarse meshes. 

To treat this observed spectral loss in the standard two-grid algorithm, we considered 
the BMG method that enriches the coarse space with a \emph{subspace} 
consisting of increasingly accurate approximations to the eigenpairs of interest.  
Moreover, we showed that these approximations can be computed by applying only a few 
steps of relaxation to related  
symmetric source problems on finer meshes. Moreover, we showed that our approach is able to approximate eigenpairs with large multiplicity 
provided that the enrichment space has a sufficiently large dimension.  
Generally, the dimension of the 
coarsest space needs to depend on the dimension of the eigenspace that the user 
wants to compute. 
In addition, we showed that the bootstrap eigensolver can be used to compute a large portion of 
the eigenspace, starting with the smallest eigenpairs.  We also showed that if 
instead only a few large eigenpairs are sought, then shifted indefinite systems can 
be solved using the BMG algorithm with Kaczmarz smoother in order to compute 
interior eigenpairs directly.  Finally, we proved the convergence of our two-grid BMG 
approach for the shifted Laplace-Beltrami eigenvalue problem in a simplified setting, illustrating the ability of the method to directly compute interior eigenvalues.

\section*{Acknowledgements}
The authors' research was supported in part by the National Science Foundation 
under grants DMS-1320608, DMS-1418934, DMS-1620346, and DMS-2136075. The authors appreciate the referees for their suggested revisions of the paper.


\begin{thebibliography}{10}

\bibitem{aubin2013}
{\sc T.~Aubin}, {\em Some nonlinear problems in Riemannian geometry}, Springer
  Science \& Business Media, 2013.

\bibitem{babuvska1989finite}
{\sc I.~Babu{\v{s}}ka and J.~E. Osborn}, {\em {Finite element-Galerkin
  approximation of the eigenvalues and eigenvectors of self-adjoint problems}},
  Mathematics of computation, 52 (1989), pp.~275--297.

\bibitem{babuska-osborn}
{\sc I.~Babu\v{s}ka and J.~Osborn}, {\em Eigenvalue problems}, in Finite
  Element Methods (Part 1), vol.~2 of Handbook of Numerical Analysis, Elsevier,
  1991, pp.~641 -- 787.

\bibitem{Boffi2010finite}
{\sc D.~Boffi}, {\em Finite element approximation of eigenvalue problems}, Acta
  Numerica, 19 (2010), pp.~1--120.

\bibitem{BonitoDemlowNochetto2020finite}
{\sc A.~Bonito, A.~Demlow, and R.~H. Nochetto}, {\em Finite element methods for
  the {Laplace--Beltrami} operator}, in Handbook of Numerical Analysis,
  vol.~21, Elsevier, 2020, pp.~1--103.

\bibitem{Bonito2018priori}
{\sc A.~Bonito, A.~Demlow, and J.~Owen}, {\em A priori error estimates for
  finite element approximations to eigenvalues and eigenfunctions of the
  {Laplace--Beltrami} operator}, SIAM Journal on Numerical Analysis, 56 (2018),
  pp.~2963--2988.

\bibitem{BAMG2}
{\sc A.~Brandt}, {\em Multiscale scientific computation: review 2001}, in
  Multiscale and Multiresolution Methods: Theory and Applications, T.~J. Barth,
  T.~F. Chan, and R.~Haimes, eds., Springer, Heidelberg, 2001, pp.~1--96.

\bibitem{BAMG2010}
{\sc A.~Brandt, J.~Brannick, K.~Kahl, and I.~Livshits}, {\em Bootstrap {AMG}},
  SIAM Journal of Scientific Computing, 33 (2011), pp.~612--632.

\bibitem{1985BrandtA_McCormickS_RugeJ-aa}
{\sc A.~Brandt, S.~McCormick, and J.~Ruge}, {\em Algebraic multigrid ({AMG})
  for automatic multigrid solution with application to geodetic computations},
  tech. rep., Colorado State University, Fort Collins, Colorado, 1983.

\bibitem{BAMGOpt}
{\sc J.~Brannick, F.~Cao, K.~Kahl, R.~D. Falgout, and X.~Hu}, {\em Optimal
  interpolation and compatible relaxation in classical algebraic multigrid},
  SIAM Journal on Scientific Computing, 40 (2018), pp.~A1473--A1493.

\bibitem{BAMGSchwinger}
{\sc J.~Brannick and K.~Kahl}, {\em Bootstrap {AMG} for the {W}ilson {D}irac
  system}, SIAM J. Sci. Comput., 36 (2014), pp.~321--347.

\bibitem{Breetal2}
{\sc M.~Brezina, R.~Falgout, S.~MacLachlan, T.~Manteuffel, S.~McCormick, and
  J.~Ruge}, {\em Adaptive smoothed aggregation {$\alpha${SA}}}, SIAM J. Sci.
  Comput., 25 (2004), pp.~1896--1920.

\bibitem{Breetal3}
\leavevmode\vrule height 2pt depth -1.6pt width 23pt, {\em Adaptive algebraic
  multigrid}, SIAM J. Sci. Comput., 27 (2006), pp.~1261--1286 {electronic}.

\bibitem{CaiMandelMcCor97mgeig}
{\sc Z.~Cai, J.~Mandel, and S.~McCormick}, {\em Multigrid methods for nearly
  singular linear equations and eigenvalue problems}, SIAM Journal on Numerical
  Analysis, 34 (1997), pp.~178--200.

\bibitem{Chan15}
{\sc T.~F. Chan and I.~Sharapov}, {\em Subspace correction multi-level methods
  for elliptic eigenvalue problems}, Numerical linear algebra with
  applications, 9 (2002), pp.~1--20.

\bibitem{chavel1984}
{\sc I.~Chavel}, {\em Eigenvalues in Riemannian geometry}, vol.~115, Academic
  press, 1984.

\bibitem{Chen.L2008c}
{\sc L.~Chen}, {\em {$i$FEM}: an integrated finite element methods package in
  {MATLAB}}, tech. rep., 2009.

\bibitem{Demlow2007adaptive}
{\sc A.~Demlow and G.~Dziuk}, {\em An adaptive finite element method for the
  {L}aplace-{B}eltrami operator on implicitly defined surfaces}, SIAM Journal
  on Numerical Analysis, 45 (2007), pp.~421--442.

\bibitem{Duran2003posteriori}
{\sc R.~G. Dur{\'a}n, C.~Padra, and R.~Rodr{\'\i}guez}, {\em A posteriori error
  estimates for the finite element approximation of eigenvalue problems},
  Mathematical Models and Methods in Applied Sciences, 13 (2003),
  pp.~1219--1229.

\bibitem{Dziuk1988finite}
{\sc G.~Dziuk}, {\em {Finite Elements for the Beltrami operator on arbitrary
  surfaces}}, in Partial Differential Equations and Calculus of Variations,
  S.~Hildebrandt and R.~Leis, eds., vol.~1357 of Lecture Notes in Mathematics,
  Springer Berlin Heidelberg, 1988, pp.~142--155.

\bibitem{Dziuk2013finite}
{\sc G.~Dziuk and C.~M. Elliott}, {\em Finite element methods for surface
  {PDE}s}, Acta Numerica, 22 (2013), pp.~289--396.

\bibitem{Ern;Guermond:2013Theory}
{\sc A.~Ern and J.-L. Guermond}, {\em Theory and practice of finite elements},
  vol.~159, Springer Science \& Business Media, 2013.

\bibitem{Han;Xie;Xu:2017CASCADIC}
{\sc X.~Han, H.~Xie, and F.~Xu}, {\em A cascadic multigrid method for
  eigenvalue problem}, Journal of Computational Mathematics, 35 (2017),
  pp.~74--90.

\bibitem{hobson1955}
{\sc E.~W. Hobson}, {\em The theory of spherical and ellipsoidal harmonics},
  CUP Archive, 1955.

\bibitem{HuCheng11ieig}
{\sc X.~Hu and X.~Cheng}, {\em Acceleration of a two-grid method for eigenvalue
  problems}, Math. Comp., 80 (2011), pp.~1287--1301.

\bibitem{KZ15}
{\sc M.~Kaschiev}, {\em An iterative method for minimization of the {R}ayleigh
  {R}itz functional}, Computational Processes and Systems, 6 (1988), pp.~160 --
  170.

\bibitem{Lin2015multi}
{\sc Q.~Lin and H.~Xie}, {\em A multi-level correction scheme for eigenvalue
  problems}, Mathematics of Computation, 84 (2015), pp.~71--88.

\bibitem{M15}
{\sc S.~Maliassov}, {\em On the analog of schwarz method for spectral
  problems}, Numerical methods and mathematical modeling,  (1992), pp.~70 --
  79.

\bibitem{iBAMG}
{\sc T.~Manteuffel, S.~McCormick, M.~Park, and J.~Ruge}, {\em Operator-based
  interpolation for bootstrap algebraic multigrid}, Numerical Linear Algebra
  with Applications, 17 (2010), pp.~519--537.

\bibitem{Persson-distmesh}
{\sc P.-O. Persson and G.~Strang}, {\em A simple mesh generator in {MATLAB}},
  SIAM Review, 46 (2004), pp.~329--345.

\bibitem{racheva2002superconvergence}
{\sc M.~R. Racheva and A.~B. Andreev}, {\em Superconvergence postprocessing for
  eigenvalues}, Computational Methods in Applied Mathematics Comput. Methods
  Appl. Math., 2 (2002), pp.~171--185.

\bibitem{reusken2014analysis}
{\sc A.~Reusken}, {\em {Analysis of trace finite element methods for surface
  partial differential equations}}, IMA Journal of Numerical Analysis, 35
  (2014), pp.~1568--1590.

\bibitem{Reusken2022analysis}
{\sc A.~Reusken}, {\em Analysis of finite element methods for surface
  vector-laplace eigenproblems}, Mathematics of Computation, 91 (2022),
  pp.~1587--1623.

\bibitem{Schoen-Yau}
{\sc R.~Schoen and S.-T. Yau}, {\em Lectures on differential geometry}, vol.~2
  of Conference Proceedings and Lecture Notes in Geometry and Topology, I.
  International Press, Cambridge, MA, 1994.

\bibitem{Urschel;Hu;Xu;Zikatanov:2014cascadic}
{\sc J.~C. Urschel, J.~Xu, X.~Hu, and L.~T. Zikatanov}, {\em A cascadic
  multigrid algorithm for computing the fiedler vector of graph {L}aplacians},
  Journal of Computational Mathematics, 33 (2015), pp.~209--226.

\bibitem{PVanek_JMandel_MBrezina_1995a}
{\sc P.~Van\v{e}k, J.~Mandel, and M.~Brezina}, {\em ~{A}lgebraic multigrid by
  smoothed aggregation for second and fourth order elliptic problems},
  Computing, 56 (1996), pp.~179--196.

\bibitem{Wei2010recovery}
{\sc H.~Wei, L.~Chen, and Y.~Huang}, {\em {Superconvergence and gradient
  recovery of linear finite elements for the Laplace-Beltrami operator on
  general surfaces}}, SIAM Journal on Numerical Analysis, 48 (2010),
  pp.~1920--1943.

\bibitem{Wilkinson:1988:AEP:59657}
{\sc J.~H. Wilkinson}, ed., {\em The Algebraic Eigenvalue Problem}, Oxford
  University Press, Inc., New York, NY, USA, 1988.

\bibitem{XuZhou99eig}
{\sc J.~Xu and A.~Zhou}, {\em A two-grid discretization scheme for eigenvalue
  problems}, Math. Comp., 70 (1999), pp.~17--25.

\bibitem{ZhouHul14MaxEig}
{\sc J.~Zhou, X.~Hu, L.~Zhong, S.~Shu, and L.~Chen}, {\em {Two-Grid Methods for
  Maxwell Eigenvalue Problems}}, SIAM Journal on Numerical Analysis, 52 (2014),
  pp.~2027--2047.

\end{thebibliography}
\end{document}